\theoremstyle{definition}
\newtheorem{theorem}{Theorem}[section]
\newtheorem{proposition}[theorem]{Proposition}
\newtheorem{lemma}[theorem]{Lemma}
\newtheorem{definition}[theorem]{Definition}
\numberwithin{figure}{section}
\numberwithin{table}{section}
\numberwithin{equation}{section}
\renewcommand{\phi}{\varphi}
\renewcommand{\epsilon}{\varepsilon}
\newcommand{\R}{\mathbb{R}}
\newcommand{\Qad}{Q^{\text{ad}}}
\let\bs\boldsymbol
\newcommand\boldu{\bs u}
\newcommand\boldv{\bs v}
\newcommand{\fracture}[1]{#1}
\begin{document}
\date{\today}

\title{Coefficient Control of Variational Inequalities}

\author[Hehl]{Andreas Hehl}
\address{Andreas Hehl: Universität Bonn, Institut für Numerische
  Simulation
  }
\email{hehl@ins.uni-bonn.de}
\author[Khimin]{Denis Khimin}
\address{Denis Khimin: Leibniz Universität Hannover, Fakultät für
  Mathematik und Physik, Institut für Angewandte Mathematik
  }
\email{khimin@ifam.uni-hannover.de}
\author[Neitzel]{Ira Neitzel}
\address{Ira Neitzel: Universität Bonn, Institut für Numerische
  Simulation
  }
\email{neitzel@ins.uni-bonn.de}
\author[Simon]{Nicolai Simon}
\address{Nicolai Simon: Universit{\"a}t Hamburg, MIN Fakult{\"a}t,
  Fachbereich Mathematik
  }
\email{nicolai.simon@uni-hamburg.de}
\author[Wick]{Thomas Wick}
\address{Thomas Wick: Leibniz Universität Hannover, Fakultät für
  Mathematik und Physik, Institut für Angewandte Mathematik
  }
\email{thomas.wick@ifam.uni-hannover.de}
\author[Wollner]{W.~Wollner}
\address{Winnifried Wollner: Universit{\"a}t Hamburg, MIN Fakult{\"a}t,
  Fachbereich Mathematik
   }
\email{winnifried.wollner@uni-hamburg.de}

\maketitle

\begin{abstract}
  Within this chapter, we discuss control in the coefficients of an
obstacle problem. Utilizing tools from H-convergence, we show
existence of optimal solutions. First order necessary optimality
conditions are obtained after deriving directional differentiability
of the coefficient to solution mapping for the obstacle problem.
Further, considering a regularized obstacle problem as a constraint yields a limiting optimality system after proving, strong, convergence of
the regularized control and state variables. Numerical examples underline convergence with respect to the regularization.
\fracture{Finally, some numerical experiments highlight the possible
  extension of the results to coefficient control in phase-field fracture.}

\end{abstract}


\section{Introduction}
In this chapter, we consider an optimization problem of the form 
\begin{equation}\label{eq:P}
  \begin{aligned}
    \min &\;J(q,u) = j(u)+\frac{\alpha}{2}\|q\|^2\\
    &\text{s.t.}\;\left\{
      \begin{aligned}
        (q \nabla u, \nabla (v-u)) &\ge (f,v-u) &  \forall v &\in K,\\
        u & \in K, &  q &\in \Qad,
      \end{aligned}
    \right.
  \end{aligned}
\end{equation}
governed by an obstacle problem in a domain $\Omega\subset \R^d$, 
where $d$ denotes the spatial dimension, 
for a control $q$ in an admissible set $\Qad \subset
L^2(\Omega;\R^{d\times d}_{\text{sym}})$ that is closed and convex, and a state $u\in K \subset H^1_0(\Omega)$. 
The precise mathematical problem statement
will be presented in Section~\ref{sec:existence}. In the following, let us briefly 
comment on the control-to-state coupling.
For each given fixed, uniformly positive definite, control $q\in \Qad$ acting as a coefficient function and data $f$ the obstacle problem of finding $u \in K$ solving
\begin{equation*}
  (q \nabla u, \nabla (v-u)) \ge (f,v-u) \quad  \forall v \in K,
\end{equation*}
has a well developed
theory providing existence and regularity of solutions $u$, see,
e.g.,~\cite{KinderlehrerStampacchia:2000,Rodrigues:1987}.

Optimization problems similar to~\eqref{eq:P}, but with control acting in the right hand side $f$,
rather than the coefficient $q$, have been investigated over 
many years. Indeed, even in this case the obstacle problem gives rise to
a non-differentiable operator $f \mapsto u$, in general.
Early works by~\cite{Haraux:1977} provided directional
differentiability, and~\cite{Mignot:1976,MignotPuel:1984} provide
necessary optimality conditions for such problems. Similar results for
constraints of Signorini rather than obstacle type can 
be found in~\cite{Barbu:1981}.
For an overview of these results;
see also~\cite{Barbu:1984} or~\cite{BonnansShapiro:2000}.

The inherent non-differentiability of the previous problem statement, with control in the right hand side $f,$
motivated the investigation of relaxation approaches for the
variational inequality in~\cite{Bergounioux:1997}. A scheme
that allows for an efficient solution is the primal-dual
active set method proposed in~\cite{ItoKunisch:2000}. A convergence analysis
for a similar regularization approach was established in~\cite{SchielaWachsmuth:2013}.

The lack of differentiability results in the difficulty of
asserting suitable necessary optimality conditions for this problem
class and different stationarity concepts, such as strong, weak, C-, or
M-stationarity need to be considered. Indeed, strong stationarity is a
necessary optimality condition if suitable compatibility conditions on
the control bound are satisfied and the control space is large
enough~\cite{Wachsmuth:2014},
but in more general situations weaker concepts need to be considered,
see, e.g.~\cite{Wachsmuth:2016}. We refer also
to~\cite{HarderWachsmuth:2018} for a comparison of different
stationarity concepts. Recently,~\cite{RaulsUlbrich:2019}
characterized the Bouligand generalized differential for the mapping
$f \mapsto u$ given by the obstacle problem, and~\cite{Christof:2019}
obtained stationarity conditions for time dependent variational
inequalities of obstacle type. Moreover, the authors 
of~\cite{AlphonseHintermuellerRautenberg:2019} provided
directional differentiability results for quasi-variational
inequalities of obstacle type with control in the right hand side $f$.
Further,~\cite{ChristofWachsmuth:2020} established sensitivity results
for variational inequalities of second kind.

Algorithmic approaches for this problem class can be based on
the regularization of the variational
inequality~\cite{KunischWachsmuth:2012} coupled with a path-following
strategy~\cite{KunischWachsmuth:2012b}. The latter can also be coupled with
adaptive mesh refinement utilizing a posteriori 
error estimates~\cite{MeyerRademacherWollner:2014}.
Alternatively, non-smooth optimization techniques such as
bundle-methods can be combined with inexact solutions of the
sub-problems as proposed in~\cite{HertleinUlbrich:2019}.
Based on the observation that real valued Lipschitz functions on
Banach spaces are differentiable on a dense subset, if the norm is
differentiable away from zero~\cite{Preiss:1990}, it was proposed
by~\cite{ChristofReyesMeyer:2017} to utilize smooth sub-problems in a
trust-region framework whenever differentiability can be asserted and
only fall back to the use of non-linear directional derivatives if
differentiability fails.

The novelty of this book chapter is the extension of some of the previously mentioned
findings to
control in the coefficient $q$ rather than the right hand side $f$.
In this context, existence no longer follows from the compactness of
the mapping $L^2 \ni f \mapsto u \in H^1$, and
H-convergence~\cite{MuratTartar:1985,MuratTartar:1997,Tartar:1997} or
G-convergence~\cite{Spangnolo:1976} needs to be considered to analyze
well-posedness of the problem. We also refer the reader to~\cite{Allaire:2002}. These techniques have been utilized successfully in
the context of free-material
optimization~\cite{HaslingerKocvaraLeugeringStingl:2010} as well as in
the discretization error analysis of matrix identification problems~\cite{DeckelnickHinze:2011a,DeckelnickHinze:2012}.

\fracture{Furthermore, we provide a proof of concept to extend the 
  coefficient control to phase-field fracture. 
  The variational approach to fracture, known as phase-field nowadays, 
  goes back to~\cite{FraMar98,BourFraMar00,KuMue10,MieWelHof10a}.
  Monographs, recent overviews, and two phase-field benchmark settings
  are provided
  in~\cite{BourFraMar08,AmGeraLoren15,BouFra19,Wi20_book,Schroeetal20,Fra21,DiLiWiTy22}. 
  Our formulation starts from our own prior work~\cite{NeitzelWickWollner:2017,NeiWiWo19}.
  In the current work, the coefficient control acts in the elasticity 
  tensor as it is common in free-material optimization, see,
  e.g.,~\cite{HaslingerKocvaraLeugeringStingl:2010}. A prototype setting 
  is described and investigated in a computational fashion.
}

The rest of the manuscript is organized as follows. In
Section~\ref{sec:existence}, we will state the precise problem under
consideration and collect some well known facts on the obstacle
problem needed in the upcoming analysis. We will then discuss the
well-posedness of our optimization problem.
In Section~\ref{sec:differentiability}, we will discuss directional
differentiability of the control to state map, generalizing results
of~\cite{Mignot:1976} to the case of control in the coefficients, and eventually obtain a first formulation of first order necessary optimality conditions. These results will be
detailed in a forthcoming publication. In Section~\ref{sec:optcond_regular},
we discuss recent results concerning optimality conditions for a regularized variational inequality
as given in~\cite{SimonWollner:2023}. In
Section~\ref{sec:numerics}, we show some numerical results for the
coefficient control in the obstacle problem highlighting the
convergence of the regularized solutions to the limiting VI-solution.
\fracture{Finally, in Section~\ref{sec:fracture}, we provide a
  prototypical extension towards phase-field fracture and provide some
  illustrating numerical examples.
}
The article closes with an outlook and summary of further project
results in Section~\ref{sec:outlook}.
\section{Problem Statement and Existence of Solutions}\label{sec:existence}
In this section, we start by collecting precise assumptions for our model problem and eventually show existence of at least one global minimizer.
\subsection{Notation and Assumptions}
Let us first agree on some general notation and underlying assumptions.
Let $\Omega \subset \R^d$ be a given Gr\"oger-regular domain, c.f. \cite{Groeger:1989}, where $d \in
\{ 1,\ldots,3 \}$. We use the notation $Q:= L^2(\Omega;\R^{d\times
  d}_{\text{sym}})$ and $U:=H_0^1(\Omega)$, $\|\cdot\|$ and
$(\cdot,\cdot)$ denote the $L^2(\Omega)$-norm and inner product for
scalar, vector, and matrix valued functions. The admissible sets for the control and state are defined as
\begin{align*}
  \Qad &= \{ q \in L^2(\Omega;\R^{d\times d}_{\text{sym}})\,|\, 0
  \prec q_{\min}I \preccurlyeq q(x) \preccurlyeq q_{\max}I,\; \text{a.e.} \}
  \subset Q,\\
  K &= \{ u \in H^1_0(\Omega)\,|\, u \le \psi\} \subset U, 
\end{align*}
for given $q_{\min},q_{\max}, \psi \in \R$ with $q_{\min}<q_{\max}$, $\psi > 0$, and $I$ denoting the identity matrix. Here
$\prec$ and $\preccurlyeq$ denote the standard ordering of symmetric
matrices given by the positive definite and positive semi-definite cone, respectively.

Moreover, let $f\in L^2(\Omega)$ be fixed, $\alpha>0$ a positive cost parameter, and $j \colon U
\rightarrow \R$ a weakly lower semicontinuous Fr{\'e}chet-differentiable functional that is bounded from below. Then, we will consider the optimization problem
\begin{equation}\label{eq:opt}\tag{P}
  \begin{aligned}
    \min &\;J(q,u) = j(u)+\frac{\alpha}{2}\|q\|^2\\
    &\text{s.t.}\;\left\{
      \begin{aligned}
        (q \nabla u, \nabla (v-u)) &\ge (f,v-u) &  \forall v &\in K,\\
        u & \in K, &  q &\in \Qad.
      \end{aligned}
    \right.
  \end{aligned}
\end{equation}

Within this setup it is known that for any given $q \in \Qad$, the obstacle problem of finding $u \in
K$ solving
\begin{equation}\label{eq:obstacle}
  (q \nabla u, \nabla (v-u)) \ge (f,v-u) \quad  \forall v \in K
\end{equation}
admits a unique solution, which follows immediately from the 
equivalent strictly convex minimization problem
\begin{equation*}
  u = \operatorname{arg min}_{v \in K} \frac{1}{2} (q\nabla v,\nabla
  v) - (f,v).
\end{equation*}
This in turn is equivalent to the existence of a Lagrange multiplier $\lambda\in H^{-1}(\Omega)$ such that
\begin{equation*}
  (q\nabla u,\nabla \varphi)=(f,\varphi)-\langle \lambda,\varphi\rangle\quad\forall\varphi \in H_0^1(\Omega).
\end{equation*}
Since $f \in L^2(\Omega)$ and $\psi \in
\R$,~\cite[Chapter~5, Proposition~2.2]{Rodrigues:1987} shows that in fact
the solution satisfies $\nabla \cdot (q\nabla u) \in L^2(\Omega)$ and
thus we can define the multiplier
\begin{equation*}
  \lambda := \nabla \cdot (q\nabla u) + f \in L^2(\Omega).
\end{equation*}
Here it should be noted that $L^\infty$-regularity of the coefficient $q\in\Qad$ is in general not sufficient for $u  \in
H^2(\Omega)$.

Following~\cite[Chapter~5, Proposition~2.2]{Rodrigues:1987}, and using
that $\psi$ is constant, we find
\begin{equation}\label{eq:lambda-bound}
  \begin{aligned}
    \| \nabla \cdot (q\nabla u)\| &\le 2\|f\|,\\
    \|\lambda\| &= \|\nabla \cdot (q\nabla u)+f\|\\
    &\le 3 \|f\|.
  \end{aligned}
\end{equation}
From this, the Lax-Milgram theorem asserts the uniform bound
\begin{equation}\label{eq:u-bound}
  \|\nabla u\| \le c\frac{q_{\max}}{q_{\min}}\|f\|, 
\end{equation}
and eventually the solution $u$ of~\eqref{eq:obstacle} can
equivalently be characterized by the
complementarity system
\begin{equation}\label{eq:obstacle-comp}
  \begin{aligned}
    -\nabla \cdot (q\nabla u) + \lambda &= f & &\text{in } L^2(\Omega),\\
    \lambda & \ge 0& & \text{in } L^2(\Omega),\\
    u & \le \psi, & & \text{q.e. in }\Omega,\\
    (\lambda,u-\psi) &= 0.
  \end{aligned}
\end{equation}

For the remainder of this chapter, we define the control-to-state mapping $S\colon \Qad\to U$, $q\mapsto u$, where $u$ solves the obstacle problem~\eqref{eq:obstacle} for a given coefficient function $q\in\Qad$.

\subsection{Existence of Solutions}
In this subsection, we discuss existence of solutions
to~\eqref{eq:opt}. Due to the appearance of the product $q \nabla u$
in the variational inequality (VI)~\eqref{eq:obstacle},
weak$^*$ convergence of the control as it is induced by the
boundedness of $\Qad \subset L^\infty(\Omega;\R^{d\times d}_{\text{sym}})$ is not sufficient for
passage to the limit in the VI. To circumvent this difficulty we
resort to $H$-convergence, see, e.g.,~\cite[Section~2]{Tartar:1997}.
\begin{definition}
  A sequence $q_k \in \Qad$ $H$-converges to $q\in \Qad$ (in symbols
  $q_k \xrightarrow{H}q$) if for any sequence $u_k \in U$ 
  satisfying
  \begin{align*}
    u_k &\rightharpoonup u,& \text{in }&U,\\
    \nabla \cdot (q_k\nabla u_k)&\rightarrow f,&\text{in }&U^*
  \end{align*}
  for some $u \in U$ and $f \in U^*$ it holds 
  \begin{equation*}
    q_k\nabla u_k \rightharpoonup q \nabla u, \quad \text{in }L^2(\Omega;\R^d).
  \end{equation*}
\end{definition}
With the help of this concept we obtain the following existence result:
\begin{theorem}\label{thm:existence}
  There exists at least one solution of~\eqref{eq:opt}.
\end{theorem}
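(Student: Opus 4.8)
The plan is to use the direct method of the calculus of variations. Let $(q_k, u_k)$ be a minimizing sequence for $J$ over the feasible set, i.e.\ each $u_k = S(q_k)$ solves the obstacle problem~\eqref{eq:obstacle} for $q_k \in \Qad$ and $J(q_k, u_k) \to \inf (P)$. Since $j$ is bounded from below, the cost term $\tfrac{\alpha}{2}\|q_k\|^2$ is bounded, but in fact more is true: $\Qad$ is bounded in $L^\infty(\Omega;\R^{d\times d}_{\mathrm{sym}})$ by construction, so $(q_k)$ is bounded in $Q$. The uniform estimate~\eqref{eq:u-bound} then gives a bound on $\|\nabla u_k\|$ independent of $k$, and since $U = H^1_0(\Omega)$ and $\Omega$ is bounded, $(u_k)$ is bounded in $U$. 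Likewise the multipliers $\lambda_k := \nabla\cdot(q_k\nabla u_k) + f$ are bounded in $L^2(\Omega)$ by~\eqref{eq:lambda-bound}.

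Next I would extract convergent subsequences. After passing to a subsequence (not relabeled), $u_k \rightharpoonup \bar u$ in $U$, $\lambda_k \rightharpoonup \bar\lambda$ in $L^2(\Omega)$, and $q_k \xrightarrow{H} \bar q$ for some $\bar q \in \Qad$ — here I invoke the compactness of $\Qad$ with respect to $H$-convergence, which is the standard Murat--Tartar result and is exactly why $H$-convergence was introduced in the preceding paragraph; one also needs that the $H$-limit of a sequence in $\Qad$ again lies in $\Qad$, which holds because the bounds $q_{\min} I \preccurlyeq q \preccurlyeq q_{\max} I$ are stable under $H$-convergence. Now I must verify that $(\bar q, \bar u)$ is feasible, i.e.\ $\bar u = S(\bar q)$. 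From the complementarity/PDE form $-\nabla\cdot(q_k\nabla u_k) + \lambda_k = f$ in $L^2(\Omega)$ we get $\nabla\cdot(q_k\nabla u_k) = \lambda_k - f$, which converges strongly in $U^* = H^{-1}(\Omega)$ (since $L^2 \hookrightarrow H^{-1}$ compactly, or simply because $\lambda_k - f$ is bounded in $L^2$ and $L^2 \hookrightarrow H^{-1}$ continuously with the weak $L^2$ limit giving strong $H^{-1}$ convergence along a further subsequence). The definition of $H$-convergence then yields $q_k\nabla u_k \rightharpoonup \bar q\,\nabla\bar u$ in $L^2(\Omega;\R^d)$, so passing to the limit in the linear identity gives $-\nabla\cdot(\bar q\,\nabla\bar u) + \bar\lambda = f$.

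It remains to pass to the limit in the remaining three lines of~\eqref{eq:obstacle-comp}: $\bar\lambda \ge 0$ follows since the nonnegative cone in $L^2$ is weakly closed; $\bar u \le \psi$ follows since $K$ is convex and closed, hence weakly closed in $U$, and $u_k \in K$; and for the complementarity $(\bar\lambda, \bar u - \psi) = 0$ I would use that $u_k \to \bar u$ strongly in $L^2(\Omega)$ (Rellich) together with $\lambda_k \rightharpoonup \bar\lambda$ weakly in $L^2$, so $(\lambda_k, u_k - \psi) \to (\bar\lambda, \bar u - \psi)$, while each term in the sequence is zero. Thus $\bar u$ solves~\eqref{eq:obstacle} for $\bar q$ via the equivalent characterization~\eqref{eq:obstacle-comp}, so $(\bar q, \bar u)$ is feasible. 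Finally, weak lower semicontinuity closes the argument: $j$ is weakly lower semicontinuous on $U$ and $q \mapsto \tfrac{\alpha}{2}\|q\|^2$ is weakly lower semicontinuous on $Q$ (convex and continuous), and $q_k \rightharpoonup \bar q$ in $Q$ — which follows from $H$-convergence plus boundedness, or can be arranged directly — hence
\begin{equation*}
  J(\bar q, \bar u) \le \liminf_{k\to\infty} J(q_k, u_k) = \inf(P),
\end{equation*}
so $(\bar q, \bar u)$ is a global minimizer.

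I expect the main obstacle to be the passage to the limit in the state equation, specifically ensuring the hypotheses of the $H$-convergence definition are met — one needs $\nabla\cdot(q_k\nabla u_k) \to f - \bar\lambda$ \emph{strongly} in $U^*$, not merely weakly, and this requires the compact embedding $L^2(\Omega) \hookrightarrow H^{-1}(\Omega)$ applied to the bounded sequence $(\lambda_k)$. A secondary subtlety is the joint convergence bookkeeping: $H$-convergence of $q_k$ is defined through an \emph{arbitrary} test sequence, so one must carefully feed in precisely the sequence $(u_k)$ of obstacle-problem solutions, having first established its weak $U$-limit and the strong $U^*$-convergence of $\nabla\cdot(q_k\nabla u_k)$. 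Everything else — boundedness, weak closedness of cones and of $K$, lower semicontinuity of the cost — is routine.
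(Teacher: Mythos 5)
Your argument follows the paper's proof almost verbatim up to the final step: minimizing sequence, uniform bounds~\eqref{eq:u-bound} and~\eqref{eq:lambda-bound}, $H$-compactness of $\Qad$, strong $U^*$-convergence of $\nabla\cdot(q_k\nabla u_k)$ via the compact embedding $L^2(\Omega)\hookrightarrow H^{-1}(\Omega)$, identification of the limit through the complementarity system~\eqref{eq:obstacle-comp}, and weak closedness of $K$ and of the nonnegative cone (the paper invokes Mazur's lemma for the latter; same content). All of that is correct and is exactly the intended route.

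The genuine gap is in the lower semicontinuity of the control cost. You claim that $q_k \rightharpoonup \bar q$ in $Q$ ``follows from $H$-convergence plus boundedness.'' It does not: the $H$-limit of a sequence of coefficient matrices is in general \emph{not} its weak (or weak-$*$) limit --- this is the whole point of homogenization, where e.g.\ laminated microstructures produce $H$-limits involving harmonic rather than arithmetic means. If you extract a further subsequence with $q_k \rightharpoonup q^*$ weakly in $Q$, then generically $q^* \neq \bar q$; your feasible candidate is $(\bar q,\bar u)$ built from the $H$-limit, but weak lower semicontinuity of the norm only gives $\|q^*\|^2 \le \liminf_k \|q_k\|^2$, which says nothing about $\|\bar q\|$. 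What is actually needed, and what the paper cites~\cite[Lemma~2.1]{DeckelnickHinze:2011a} for, is that $q\mapsto\|q\|^2$ is lower semicontinuous \emph{with respect to $H$-convergence}. This can be salvaged from what you have: Tartar's comparison result gives $\bar q \preccurlyeq q^*$ a.e., and for symmetric matrices $0 \preccurlyeq A \preccurlyeq B$ one has $\operatorname{tr}(A^2)\le\operatorname{tr}(AB)\le\operatorname{tr}(B^2)$, hence $\|\bar q\|\le\|q^*\|\le\liminf_k\|q_k\|$; but some such argument (or the citation) must be supplied --- the step as you wrote it is false as stated.
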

\begin{proof}
  The proof follows the standard line of arguments. Clearly, $J$ is
  bounded from below, and thus we can select a minimizing sequence
  $(q_k,u_k) \in \Qad \times K$, with corresponding Lagrange multiplier
  $\lambda_k$. 
  Due to $H$-compactness of $\Qad$, see,
  e.g.,~\cite[Theorem~1.2.16]{Allaire:2002},
  we can select an $H$-convergent subsequence $q_k$ with limit $q$.
  Further, by the bound~\eqref{eq:u-bound} $u_k \rightharpoonup u$ in
  $U$ and by~\eqref{eq:lambda-bound}
  \begin{equation*}
    \nabla \cdot (q_k\nabla u_k)\rightharpoonup g \quad\text{in } L^2(\Omega)
  \end{equation*}
  for some $g \in L^2(\Omega)$.
  By compactness of the embedding $L^2(\Omega) \subset U^*$ the convergence is strong in $U^*$
  and the definition of $H$-convergence asserts
  \begin{equation*}
    q_k\nabla u_k \rightharpoonup q \nabla u,\quad \text{in }L^2(\Omega;\R^d).
  \end{equation*}
  Further, the bound~\eqref{eq:lambda-bound}
  implies weak convergence $\lambda_k \rightharpoonup \lambda$ in
  $L^2(\Omega)$. 
  Combined, we see that the limit $(q,u,\lambda)$ satisfies the first equation
  in~\eqref{eq:obstacle-comp}. Clearly, $K$ is closed with respect to
  weak convergence in $U$, i.e., $u \le \psi$ holds, and the
  complementarity relation $(\lambda,\psi-u) =0$ follows from strong
  convergence of $u_k$ in $L^2(\Omega)$. The sign condition on $\lambda$
  follows immediately by Mazur's lemma. Thus, the limit satisfies
  the system~\eqref{eq:obstacle-comp}, which means that the pair $(q,u)$
  solves~\eqref{eq:obstacle}.
  
  It remains to see the lower-semicontinuity of $J$. The first term is
  weakly lower semicontinuous by assumption, while the norm on $q$ is
  lower-semicontinuous with respect to $H$-convergence
  by~\cite[Lemma~2.1]{DeckelnickHinze:2011a}.
  This shows the assertion.
\end{proof}

\section{First Order Necessary Optimality Conditions}\label{sec:differentiability}
In this section, we first prove directional differentiability of the control-to-state mapping $S$ and eventually derive a first order optimality condition in form a variational inequality, that can be viewed as a first step towards optimality conditions in qualified form.
\subsection{Directional Differentiability of $S$}In order to prove directional differentiability of the control to state map
$S$ in the sense of Hadamard, see,
e.g.,~\cite[Definition~2.2]{Shapiro:1990}, we need to show that for a given control $q \in \Qad$ and direction $d \in Q$
with $q+d \in \Qad$, the limit
\begin{equation*}
  S'(q;d) := \lim_{t \downarrow 0} \frac{S(q+td)-S(q)}{t}.
\end{equation*}
exists. Let us proceed in two steps. First, let us define the auxiliary operator
\begin{equation*}
  \hat S_q\colon H^{-1}(\Omega)\to H_0^1(\Omega),\quad f\mapsto u,
\end{equation*}
with $u\in K$ such that
\begin{equation*}
  (q\nabla u,\nabla (\varphi-u))\ge (f,\varphi-u),\quad \forall \varphi\in K.
\end{equation*}
In other words, in this operator the coefficient matrix $q\in\Qad$ is
fixed, and the right-hand-side $f\in H^{-1}(\Omega)$ is mapped to the
solution of the classical obstacle problem with the given coefficient $q$. Note that then we have  \begin{equation*}u=S(q)=\hat S_q (f).\end{equation*}
As already pointed out in the introduction, the operator $S_q$ acting
on a control in the right hand side is well understood.  Applying the
results of~\cite{Mignot:1976} and~\cite{Wachsmuth:2014}, we obtain:

\begin{lemma}
  \label{thm:diffSq}
  The operator $\hat S_q\colon H^{-1}(\Omega)\to H_0^1(\Omega)$ is directionally differentiable at all points $f\in H^{-1}(\Omega)$. Its directional derivative $z:=\hat S_q'(f;h)\in H_0^1(\Omega)$ in direction $h\in H^{-1}(\Omega)$ is given by the unique solution of the variational inequality
  \begin{equation}
    z\in\mathcal{K}(f),\quad (q\nabla z,\nabla(\varphi-z))\ge (h,\varphi-z)\quad \forall \varphi\in \mathcal{K}(f),
  \end{equation}
  with 
  \begin{equation*}
    \mathcal{K}(f)=\{\varphi\in H_0^1(\Omega)\colon \varphi\le 0\text{ q.e. on }\{\hat S_q(f)=\psi\} \text{ and } \varphi=0 \text{ q.e. on q-supp}(\lambda) \}, 
  \end{equation*}
  where $u=\hat S_q (f)$ and $\lambda=\nabla\cdot(q\nabla u)+f$ is the associated Lagrange multiplier defined in~\eqref{eq:obstacle-comp}.
\end{lemma}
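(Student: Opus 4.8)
The plan is to recognize that Lemma~\ref{thm:diffSq} is essentially a restatement of classical results and to assemble it from the literature rather than reprove it from scratch. The operator $\hat S_q$ is, up to a fixed change of the bilinear form, exactly the classical obstacle-problem solution operator studied by Mignot. First I would observe that the bilinear form $a_q(u,v) := (q\nabla u,\nabla v)$ is bounded and coercive on $H_0^1(\Omega)$, uniformly in the sense that $a_q(v,v) \ge q_{\min}\|\nabla v\|^2$ and $|a_q(u,v)| \le q_{\max}\|\nabla u\|\|\nabla v\|$, because $q\in\Qad$ satisfies $q_{\min}I \preccurlyeq q(x) \preccurlyeq q_{\max}I$ a.e. Hence the convex set $K = \{v \le \psi\}$ together with $a_q$ fits the abstract framework of~\cite{Mignot:1976} (polyhedricity of $K$ in $H_0^1(\Omega)$ with respect to the $H^{-1}$-duality is classical and uses that $\psi$ is constant, in particular nonnegative so that $0 \in K$). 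This yields directional (indeed Hadamard) differentiability of $h \mapsto \hat S_q'(f;h)$ with the derivative characterized as the solution of a variational inequality over the critical cone.

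The second step is to identify the critical cone explicitly. Following Mignot, the critical cone at $f$ is
\begin{equation*}
  \mathcal{K}(f) = \{\varphi \in H_0^1(\Omega) : \varphi \le 0 \text{ q.e.\ on } \{u=\psi\},\ \langle \lambda,\varphi\rangle = 0\},
\end{equation*}
where $u = \hat S_q(f)$ and $\lambda = \nabla\cdot(q\nabla u)+f \ge 0$ is the Lagrange multiplier. To turn the annihilation condition $\langle\lambda,\varphi\rangle = 0$ into the pointwise statement ``$\varphi = 0$ q.e.\ on $\mathrm{q\text{-}supp}(\lambda)$'' I would invoke that $\lambda$ is a nonnegative measure (here in fact $\lambda \in L^2(\Omega)$ by the regularity established earlier in the excerpt, via~\cite[Chapter~5, Proposition~2.2]{Rodrigues:1987}) concentrated on the contact set; for $\varphi$ with $\varphi \le 0$ on $\{u=\psi\}$ one has $\langle\lambda,\varphi\rangle \le 0$, and equality forces $\varphi$ to vanish quasi-everywhere on the quasi-support of $\lambda$. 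This is exactly the refinement carried out in~\cite{Wachsmuth:2014}, whose notion of quasi-support I would cite directly; combining it with Mignot's VI gives the stated form of $\mathcal{K}(f)$.

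Finally, uniqueness of $z$ solving the VI over $\mathcal{K}(f)$ is immediate: $\mathcal{K}(f)$ is a closed convex cone in $H_0^1(\Omega)$ and $a_q$ is coercive, so the VI $z\in\mathcal{K}(f)$, $a_q(z,\varphi-z)\ge\langle h,\varphi-z\rangle$ for all $\varphi\in\mathcal{K}(f)$ has a unique solution by Lions--Stampacchia. The main obstacle, such as it is, lies not in any single estimate but in checking that the hypotheses of~\cite{Mignot:1976} and~\cite{Wachsmuth:2014} genuinely apply with the $q$-dependent bilinear form and with $f\in H^{-1}(\Omega)$ rather than $f\in L^2(\Omega)$ — in particular, that polyhedricity of $K$ does not depend on $q$ (it does not, since it is a property of the convex set $K$ and the space $H_0^1(\Omega)$ alone) and that the characterization of the critical cone via the quasi-support remains valid when $\lambda$ is merely an element of $H^{-1}(\Omega)$. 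Since all of this is covered by the cited works, the proof reduces to citing them and remarking on the uniform coercivity/boundedness of $a_q$.
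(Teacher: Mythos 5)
Your proposal is correct and follows essentially the same route as the paper, which likewise proves the lemma by citing \cite[Theorem~2.9]{NeitzelWachsmuth} (going back to \cite{Mignot:1976}) for the Hadamard differentiability of the obstacle-problem solution operator and \cite{Wachsmuth:2014} for the concrete quasi-support form of the critical cone. Your additional remarks on the uniform coercivity of $a_q$ and the $q$-independence of the polyhedricity of $K$ are exactly the checks needed to justify that citation, so nothing is missing.
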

\begin{proof}This is a formulation
  of~\cite[Theorem~2.9]{NeitzelWachsmuth}, cf.,~\cite{Mignot:1976},
  applied to $\hat S_q$ and adopted to our notation. The concrete form of the critical cone is due to~\cite{Wachsmuth:2014}. \end{proof}
Now, as pointed out in~\cite{NeitzelWachsmuth}, the operator $\hat S_q$ is Lipschitz continuous and therefore even Hadamard differentiable. This yields our desired differentiability result for the operator $S$:
\begin{theorem}
  \label{thm:diffS}
  The operator $S\colon Q\to H_0^1(\Omega)$ is directionally differentiable at all points $q\in \Qad$. Its directional derivative $\tilde u:=S'(q;d)\in H_0^1(\Omega)$ in direction $d\in Q$ with $q+d\in \Qad$ is given by the unique solution of the variational inequality
  \begin{equation}\label{eq:sprime}
    \tilde  u\in\mathcal{K}(q),\quad (q\nabla \tilde u,\nabla(\varphi-\tilde u))\ge (\nabla \cdot(d\nabla u),\varphi-\tilde u)\quad \forall \varphi\in \mathcal{K}(q),
  \end{equation}
  with $u=S(q)$ and 
  \begin{equation*}
    \mathcal{K}(q)=\{\varphi\in H_0^1(\Omega)\colon \varphi\le 0\text{ q.e. on }\{ u=\psi\} \text{ and } \varphi=0 \text{ q.e. on q-supp}(\lambda) \}, 
  \end{equation*}
\end{theorem}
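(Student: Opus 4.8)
The plan is to exploit the factorization $S(q)=\hat S_q(f)$ from Lemma~\ref{thm:diffSq} and to reduce the statement to the already available Hadamard differentiability of the fixed–coefficient obstacle operator $\hat S_q$. The key point is that the perturbed state $u_t:=S(q+td)$ is $\hat S_q$ evaluated at a shifted right–hand side. Since $\Qad$ is convex, $q+td=(1-t)q+t(q+d)\in\Qad$ for $t\in[0,1]$, and $d=(q+d)-q$ satisfies $-(q_{\max}-q_{\min})I\preccurlyeq d(x)\preccurlyeq(q_{\max}-q_{\min})I$ a.e., so $d\in L^\infty(\Omega;\R^{d\times d}_{\text{sym}})$. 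From $((q+td)\nabla u_t,\nabla(\varphi-u_t))\ge(f,\varphi-u_t)$ for all $\varphi\in K$ we subtract $t(d\nabla u_t,\nabla(\varphi-u_t))$ on both sides to obtain
\begin{equation*}
  (q\nabla u_t,\nabla(\varphi-u_t))\ge\langle f+t\,\nabla\cdot(d\nabla u_t),\varphi-u_t\rangle\quad\forall\varphi\in K,
\end{equation*}
so that, by uniqueness of the obstacle solution, $u_t=\hat S_q(f+t h_t)$ with $h_t:=\nabla\cdot(d\nabla u_t)\in H^{-1}(\Omega)$, $\langle h_t,\varphi\rangle=-(d\nabla u_t,\nabla\varphi)$.

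First I would establish a Lipschitz–type estimate for $S$ along the ray $t\mapsto q+td$: testing the obstacle problem for $u=S(q)$ with $\varphi=u_t$ and the one for $u_t$ with $\varphi=u$, adding, and using coercivity of $(q\nabla\cdot,\nabla\cdot)$ on $H_0^1(\Omega)$ together with the uniform bound~\eqref{eq:u-bound} gives
\begin{equation*}
  q_{\min}\|\nabla(u_t-u)\|^2\le -t(d\nabla u_t,\nabla(u_t-u))\le t(q_{\max}-q_{\min})\|\nabla u_t\|\,\|\nabla(u_t-u)\|,
\end{equation*}
hence $\|\nabla(u_t-u)\|\le Ct$ with $C$ independent of $t$. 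In particular $u_t\to u$ in $H_0^1(\Omega)$ as $t\downarrow0$, and since $d\in L^\infty$ and the divergence maps $L^2(\Omega;\R^d)$ continuously into $H^{-1}(\Omega)$, we get $h_t\to h_0:=\nabla\cdot(d\nabla u)$ strongly in $H^{-1}(\Omega)$ (with rate $O(t)$).

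Next I would pass to the limit. Fix an arbitrary sequence $t_k\downarrow0$; then $h_{t_k}\to h_0$ in $H^{-1}(\Omega)$. As recorded after Lemma~\ref{thm:diffSq}, $\hat S_q$ is locally Lipschitz and directionally differentiable, hence Hadamard directionally differentiable at $f$, so
\begin{equation*}
  \frac{S(q+t_kd)-S(q)}{t_k}=\frac{\hat S_q(f+t_kh_{t_k})-\hat S_q(f)}{t_k}\longrightarrow\hat S_q'(f;h_0)\quad\text{in }H_0^1(\Omega).
\end{equation*}
Since the limit is independent of the sequence $t_k$, $S'(q;d)$ exists and equals $\hat S_q'(f;h_0)=\hat S_q'(f;\nabla\cdot(d\nabla u))$. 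By Lemma~\ref{thm:diffSq}, $\tilde u:=\hat S_q'(f;h_0)$ is the unique element of $\mathcal{K}(f)$ with $(q\nabla\tilde u,\nabla(\varphi-\tilde u))\ge\langle h_0,\varphi-\tilde u\rangle$ for all $\varphi\in\mathcal{K}(f)$, where $\langle h_0,\varphi-\tilde u\rangle=-(d\nabla u,\nabla(\varphi-\tilde u))$ is the pairing abbreviated by $(\nabla\cdot(d\nabla u),\varphi-\tilde u)$ in~\eqref{eq:sprime}. Because $u=S(q)=\hat S_q(f)$ and the associated multiplier $\lambda=\nabla\cdot(q\nabla u)+f$ is the same in both descriptions, the sets constraining the test functions — namely $\{u=\psi\}$ and the q-support of $\lambda$ — coincide, so $\mathcal{K}(f)=\mathcal{K}(q)$ and~\eqref{eq:sprime} follows; uniqueness for~\eqref{eq:sprime} is inherited from Lemma~\ref{thm:diffSq}.

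The main obstacle is this limit passage: the shift $h_t$ depends on $t$ through the unknown $u_t$ itself, so plain directional differentiability of $\hat S_q$ at $f$ in a fixed direction would not be enough — one genuinely needs the Hadamard property combined with the convergence $h_t\to h_0$, which is precisely what the Lipschitz estimate of the second step delivers. The remaining ingredients (admissibility $q+td\in\Qad$ by convexity, $L^\infty$–boundedness of $d$, continuity of the divergence on $L^2$, and Lions--Stampacchia type uniqueness on the closed convex cone $\mathcal{K}(q)$) are routine.
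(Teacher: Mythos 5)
Your argument is correct and follows essentially the same route as the paper: rewrite $S(q+td)=\hat S_q\bigl(f+t\,\nabla\cdot(d\nabla u_t)\bigr)$ and invoke the Hadamard differentiability of $\hat S_q$ from Lemma~\ref{thm:diffSq}. Your explicit Lipschitz estimate $\|\nabla(u_t-u)\|\le Ct$, which justifies the convergence $\nabla\cdot(d\nabla u_t)\to\nabla\cdot(d\nabla u)$ in $H^{-1}(\Omega)$, is a welcome detail that the paper's proof only asserts in passing.
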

\begin{proof}
  Set $u=S(q)$ and note that $u_{td}=S(q+td)$ fulfills the variational inequality
  \begin{equation}\label{eq:aux1}
    u_{td}\in K\quad ((q+td)\nabla u_{td},\nabla (\varphi-u_{td}))\ge (f,\varphi-u_{td})\quad \forall\varphi\in K,
  \end{equation}
  which is equivalent to
  \begin{equation*}
    u_{td}\in K\quad (q\nabla u_{td},\nabla (\varphi-u_{td}))\ge (f,\varphi-u_{td})-t(d\nabla u_{td},\nabla(\varphi-u_{td}))\quad \forall\varphi\in K.
  \end{equation*}
  Using integration by parts in the right-hand-side of the last inequality we deduce \begin{equation*}S(q+td)=u_{td}=\hat S_q(f+\nabla \cdot(td\nabla u_{td})),\end{equation*} with
  \begin{equation*}f+\nabla\cdot(td\nabla u_{td})\in H^{-1}(\Omega). \end{equation*}
  We note that~\eqref{eq:aux1} implies that $\nabla\cdot(d\nabla
  u_{td})$ tends to $\nabla \cdot (d\nabla u)$ in $H^{-1}$ as
  $t$ tends to zero. Therefore, applying the
  Hadamard-differentiability of $\hat S_q$, i.e., Lemma~\ref{thm:diffSq}, we observe
  \begin{equation*}
    \frac{(S(q+td)-S(q))}{t}=\frac{\hat S_q(f+t\nabla \cdot(d\nabla u_{td})-\hat S_q(f)}{t}\to \hat S_q'(f;\nabla\cdot(d\nabla u))
  \end{equation*}
  as $t\to 0$, and hence $S$ is directionally differentiable with $S'(q;d)=\hat S_q'(f;\nabla \cdot(d\nabla u))$.
  
\end{proof}

Following~\cite{Wachsmuth:2014}, we obtain an analogue to the complementarity system~\eqref{eq:obstacle-comp}, namely that the variational inequality~\eqref{eq:sprime}
is equivalent to the complementarity system
\begin{equation}\label{eq:obstaclederivative-comp}
  \begin{aligned}
    -\nabla \cdot (q\nabla \tilde u) + \tilde\lambda &=\nabla\cdot(d\nabla u)  &\text{in } H^{-1}(\Omega),\\
    \tilde \lambda & \in \mathcal{K}(\bar q)^\circ,\\
    \tilde u & \in \mathcal{K}(\bar q),\\
    \langle\tilde \lambda,\tilde u\rangle_{H^{-1}(\Omega), H_0^1(\Omega)} &= 0.
  \end{aligned}
\end{equation}

\subsection{Optimality Conditions}\label{sec:optcond}
To obtain necessary optimality conditions, we rewrite our optimization problem~\eqref{eq:opt} into the, usual, reduced form utilizing the control-to-state mapping $S$:

\begin{equation*}
  \min\limits_{q\in\Qad} \;f(q): = j(S(q))+\frac{\alpha}{2}\|q\|^2.
\end{equation*}
Note that the Fr{\'echet} differentiability of $j$ and $\|\cdot\|^2$ and the directional differentiability of $S$ thanks to Theorem~\ref{thm:diffS} yields directional differentiability of $f$. The following primal first order necessary condition is then a straight forward consequence of the convexity of $\Qad$.
\begin{lemma}\label{lem:fonvi}
  Let $\bar q\in \Qad$ be a locally optimal control for~\eqref{eq:opt} with associated state $\bar u=S\bar q$. Then the following variational inequality is fulfilled:
  \begin{equation*}
    (j'(\bar u), S'(\bar q;q-\bar q))+\alpha(\bar q,q-\bar q)\ge 0\quad \forall q\in \Qad.
  \end{equation*}
\end{lemma}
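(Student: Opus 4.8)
The plan is a textbook convex-set first-order argument, with the one non-routine ingredient being the chain rule through the non-smooth solution operator. Fix an arbitrary $q \in \Qad$ and put $d := q - \bar q$. Because $\Qad$ is convex, the segment $\bar q + td = (1-t)\bar q + tq$ lies in $\Qad$ for all $t \in [0,1]$; in particular $q$ and $d$ constitute an admissible control and a feasible direction in the sense of Theorem~\ref{thm:diffS}, and $\bar q + td$ is a feasible competitor in~\eqref{eq:opt} for small $t>0$.

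Next I would compute the one-sided directional derivative of the reduced objective $f(q) = j(S(q)) + \tfrac{\alpha}{2}\|q\|^2$ at $\bar q$ in direction $d$. The map $q \mapsto \tfrac{\alpha}{2}\|q\|^2$ is Fr{\'e}chet differentiable with derivative $\alpha(\bar q,\cdot)$, so the only issue is the composite term $j \circ S$. By Theorem~\ref{thm:diffS} the operator $S$ is directionally differentiable at $\bar q$, and, as recorded just after that theorem, it is in fact Hadamard directionally differentiable, inheriting this from the Lipschitz continuity of $\hat S_q$. Composing a Hadamard directionally differentiable map with the Fr{\'e}chet differentiable functional $j$ preserves Hadamard directional differentiability and the chain rule holds (see, e.g.,~\cite{Shapiro:1990}), so
\begin{equation*}
  \lim_{t\downarrow 0}\frac{j(S(\bar q+td))-j(S\bar q)}{t} = (j'(\bar u),S'(\bar q;d)),
\end{equation*}
and hence $f'(\bar q;d) = (j'(\bar u),S'(\bar q;d)) + \alpha(\bar q,d)$ exists.

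Finally, I would pass to the limit in the difference quotient. Local optimality of $\bar q$ together with feasibility of $\bar q + td$ for small $t>0$ gives $f(\bar q+td) \ge f(\bar q)$, so
\begin{equation*}
  \frac{f(\bar q+td)-f(\bar q)}{t} \ge 0
\end{equation*}
for all sufficiently small $t>0$; letting $t\downarrow 0$ yields $f'(\bar q;q-\bar q) \ge 0$, which is precisely the asserted variational inequality since $q\in\Qad$ was arbitrary.

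The main obstacle---really the only place where more than bookkeeping is needed---is the chain rule step, because it genuinely requires Hadamard (not merely G{\^a}teaux) directional differentiability of $S$; this is exactly the reason the Lipschitz property of $\hat S_q$ was emphasized before Theorem~\ref{thm:diffS}. Once that is in hand, the remainder is the standard argument for minimizing a directionally differentiable functional over a convex set.
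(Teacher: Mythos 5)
Your proposal is correct and follows essentially the same route as the paper: feasibility of the convex combination $\bar q + t(q-\bar q)$, the inequality $f(\bar q + t(q-\bar q)) \ge f(\bar q)$ from local optimality, division by $t$, and passage to the limit via Theorem~\ref{thm:diffS} and the differentiability of $j$ and $\|\cdot\|^2$. The only difference is that you make explicit the chain-rule justification through Hadamard directional differentiability of $S$, which the paper leaves implicit (having noted the Lipschitz/Hadamard property of $\hat S_q$ earlier); this is a welcome clarification, not a different argument.
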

\begin{proof}
  The proof follows by standard arguments. We refer to,
  e.g.,~\cite[Lemma 3.2]{NeitzelWachsmuth} for a similar setting
  with control in the right-hand-side
  and additional state constraints. Since $\bar q\in \Qad$ and
  $\Qad$ is convex,
  we know that $q_t:=\bar q+t(q-\bar q)\in \Qad$ for all
  $q\in\Qad$.
  Since $\bar q$ is locally optimal, we observe
  \begin{align*}
    0&\le f(q_t)-f(\bar q)\\
    &= j(S(q_t))-j(S(\bar q)) +\frac{\alpha}{2}
    (\|q_t\|^2-\|\bar q\|^2)
  \end{align*}
  for all $q\in \Qad$ and $t$ sufficiently small. Since both $j$
  and $\|\cdot\|^2$ are Fr{\'e}chet differentiable, dividing by
  $t>0$ and passing to the limit yields
  \begin{equation*}
    0\le  j'(\bar u)(S'(\bar q; q-\bar q)+\alpha(\bar q,q-\bar q)\quad\forall q\in \Qad
  \end{equation*}
  by Theorem~\ref{thm:diffS}.
\end{proof}

\section{Optimality Conditions for a Regularized Problem}\label{sec:optcond_regular}

In this section, we introduce a set of limiting optimality conditions for~\eqref{eq:opt} on domains $\Omega \subset \mathbb{R}^2$ by utilizing a regularization approach and considering the limit
points of stationarity conditions for a series of regularized problems, similar to the approach in, e.g.,~\cite{MeyerRademacherWollner:2014}. To this effect, we introduce a
regularized version of the obstacle problem and consider the limits of its optimal solutions. Additional supporting results regarding the regularity estimates used can be found in~\cite{SimonWollner:2023} and a more detailed explanation of the results presented in this section with all pertinent proofs will be provided in a forthcoming paper.

For the regularized problem, we introduce a biquadratic penalization
$r: \mathbb{R}^{+} \times \mathbb{R} \to \mathbb{R}$ of the obstacle
energy functional into the problem, see,~\cite{MeyerRademacherWollner:2014}.
The resulting problem is given by 
\begin{equation}\label{eq:Regularized-Obstacle-Problem}\tag{$P_{\gamma}$}
  \begin{aligned}
    \min_{q_\gamma , u_\gamma} \quad &J\left( q_\gamma ,u_\gamma \right) = \frac{1}{2} \| u_\gamma - u_d \|^2_2 + \frac{\alpha}{2} \left\| q_\gamma \right\|^2_2\\
    \text{s.t. } &- \nabla \cdot \left( q_\gamma \nabla u_\gamma \right) + r\left( \gamma ; u_\gamma \right) = f \quad \text{in }H^{-1}(\Omega), \\
    &u_\gamma \in H_0^1(\Omega), \quad q_\gamma \in Q^{\rm{ad}},
  \end{aligned}
\end{equation}
with
\begin{equation*}
  \begin{aligned}
    r\left( \gamma ; u_\gamma \right) :=  \gamma \left[ \max \left(
        \left(  u_\gamma - \psi \right), 0 \right)^3 \right].
  \end{aligned}
\end{equation*}

Similar to the penalizations in, e.g.,~\cite{MeyerRademacherWollner:2014, SchielaWachsmuth:2013}, the penalization $r \left( \gamma ; u_\gamma \right)$ describes a locally Lipschitz continuous, monotone Nemyzkii operator. Also note, that the control is a positive definite and symmetric operator. Since, given a control $q_\gamma \in Q^{\rm{ad}}$, the left-hand side
of the PDE
\begin{equation*}
  \begin{aligned}
    - \nabla \cdot \left( q_\gamma \nabla u_\gamma \right) + r\left( \gamma ; u_\gamma \right) = f
  \end{aligned}
\end{equation*} 
is Lipschitz-continuous and monotone, we can apply the Browder-Minty theorem to ensure that for each $q_\gamma \in Q^{\rm{ad}}$ a unique solution $u_\gamma \in H_0^1 ( \Omega )$ exists. 

The existence of an optimal solution to the regularized problem can be
proven by analogous arguments as in Theorem~\ref{thm:existence} providing:
\begin{theorem}
  There exists at least one solution for~\eqref{eq:Regularized-Obstacle-Problem}.
\end{theorem}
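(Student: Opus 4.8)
The plan is to mimic the proof of Theorem~\ref{thm:existence}, replacing the variational inequality~\eqref{eq:obstacle} by the penalized PDE in~\eqref{eq:Regularized-Obstacle-Problem} and checking that every limit passage still goes through. First I would fix $\gamma>0$, note that $J(q_\gamma,u_\gamma)\ge 0$ is bounded below, and select a minimizing sequence $(q_k,u_k)\in\Qad\times H_0^1(\Omega)$ with $u_k$ solving the regularized state equation for the coefficient $q_k$. By $H$-compactness of $\Qad$ (\cite[Theorem~1.2.16]{Allaire:2002}) I pass to an $H$-convergent subsequence $q_k\xrightarrow{H}q\in\Qad$. To get compactness of the states I would test the state equation with $u_k$ itself: the penalization term $r(\gamma;u_k)$ is monotone with $r(\gamma;0)=0$, hence $(r(\gamma;u_k),u_k)\ge 0$, and the uniform positive-definiteness $q_{\min}I\preccurlyeq q_k$ gives $q_{\min}\|\nabla u_k\|^2\le (q_k\nabla u_k,\nabla u_k)\le (f,u_k)\le C\|f\|\,\|\nabla u_k\|$, so $\|\nabla u_k\|$ is bounded uniformly in $k$ and $u_k\rightharpoonup u$ in $H_0^1(\Omega)$ along a further subsequence, with $u_k\to u$ strongly in $L^2(\Omega)$ by Rellich.

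Next I would pass to the limit in the state equation. From the equation, $\nabla\cdot(q_k\nabla u_k)=r(\gamma;u_k)-f$; since $u_k\to u$ in $L^2(\Omega)$ and $r(\gamma;\cdot)$ is a continuous (locally Lipschitz) Nemytskii operator, $r(\gamma;u_k)\to r(\gamma;u)$ at least in $L^1_{\mathrm{loc}}$, and—using the regularity estimates of~\cite{SimonWollner:2023} giving a uniform $L^2$-type bound on $r(\gamma;u_k)$—one upgrades this to $r(\gamma;u_k)\rightharpoonup r(\gamma;u)$ in $L^2(\Omega)$, hence $\nabla\cdot(q_k\nabla u_k)$ converges in $H^{-1}(\Omega)$, in particular $\nabla\cdot(q_k\nabla u_k)\to r(\gamma;u)-f$ in $U^*$. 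The definition of $H$-convergence then yields $q_k\nabla u_k\rightharpoonup q\nabla u$ in $L^2(\Omega;\R^d)$, so $-\nabla\cdot(q\nabla u)+r(\gamma;u)=f$ in $H^{-1}(\Omega)$; that is, $(q,u)$ is feasible for~\eqref{eq:Regularized-Obstacle-Problem}. Finally, lower semicontinuity of $J$ follows as before: $\tfrac12\|u_k-u_d\|_2^2$ is weakly lower semicontinuous in $L^2(\Omega)$ (indeed $u_k\to u$ strongly there), and $\|q\|^2$ is lower semicontinuous along $H$-convergent sequences by~\cite[Lemma~2.1]{DeckelnickHinze:2011a}. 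Combining, $J(q,u)\le\liminf J(q_k,u_k)=\inf$, so $(q,u)$ is a global minimizer.

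The main obstacle, and the one place where the obstacle-problem argument does not transfer verbatim, is the limit passage in the nonlinear penalty term $r(\gamma;u_k)$: one needs to ensure that the weak $H^1$ (strong $L^2$) convergence of $u_k$ is enough to identify the weak limit of $r(\gamma;u_k)$ as $r(\gamma;u)$, which requires a uniform a~priori bound on $r(\gamma;u_k)$ in a reflexive space. This is exactly what the regularity results cited from~\cite{SimonWollner:2023} provide (on planar domains $\Omega\subset\R^2$), and with that bound in hand the cubic growth of the $\max(\cdot,0)^3$ term is controlled and the Nemytskii continuity argument closes the gap. Everything else—the $H$-compactness of the controls, the coercivity estimate for $\nabla u_k$, the weak-$*$/weak lower semicontinuity of the cost—is a direct repetition of the reasoning in the proof of Theorem~\ref{thm:existence}.
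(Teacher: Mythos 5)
Your argument is correct and is precisely the ``analogous argument to Theorem~\ref{thm:existence}'' that the paper invokes without writing out: $H$-compactness of $\Qad$, the coercivity-plus-monotonicity bound on $\|\nabla u_k\|$, strong $H^{-1}(\Omega)$ convergence of $\nabla\cdot(q_k\nabla u_k)$ read off from the state equation so that $H$-convergence can be applied, and lower semicontinuity of the cost. The only remark worth adding is that the uniform $L^2$ bound on $r(\gamma;u_k)$ needed to identify its weak limit already follows from the Sobolev embedding $H^1_0(\Omega)\hookrightarrow L^6(\Omega)$ (indeed $L^p(\Omega)$ for all $p<\infty$ in the planar setting of Section~\ref{sec:optcond_regular}), so the heavier regularity estimates of~\cite{SimonWollner:2023} are not actually needed for this step.
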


Further, the regularization allows us to formulate a set of optimality conditions for this problem, see, e.g.,~\cite{DeckelnickHinze:2011a}.
\begin{proposition}\label{prop:OptSystemRegularized}
  Let $\left( \bar{q}_{\gamma} , \bar{u}_{\gamma}\right) \in Q^{\rm{ad}} \times H^1_0(\Omega)$ be a
  local minimum of~\eqref{eq:Regularized-Obstacle-Problem}. Then there exists
  $\bar{p}_{\gamma} \in H_0^1(\Omega)$
  such that
  \begin{subequations}
    \begin{align*}
      - \nabla \cdot \left( \bar{q}_\gamma \nabla \bar{u}_{\gamma} \right)
      &= f - r ( \gamma, \bar{u}_{\gamma})& & \text{in }H^{-1}(\Omega),\\
      - \nabla \cdot \left( \bar{q}_{\gamma} \nabla \bar{p}_{\gamma} \right)
      &= \bar{u}_{\gamma} - u_d- \partial_u r( \gamma, \bar{u}_{\gamma}) \bar{p}_{\gamma}& & \text{in }H^{-1}(\Omega),\\
      \left( \alpha \bar{q}_\gamma - \nabla \bar{u}_\gamma \otimes \nabla \bar{p}_\gamma \right) (q - \bar{q}_\gamma)
      &\geq 0 &&\forall q \in Q^{\rm{ad}}
    \end{align*}
  \end{subequations}
  with $\nabla u_\gamma \otimes \nabla p_\gamma$ describing the outer product of $\nabla u_\gamma$ and $\nabla p_\gamma$.
\end{proposition}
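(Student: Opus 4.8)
The plan is to treat~\eqref{eq:Regularized-Obstacle-Problem} as a smooth PDE-constrained optimal control problem and to derive the system through the reduced functional together with an adjoint state. The penalty $r(\gamma;\cdot)$ is continuously differentiable with $\partial_u r(\gamma;u)=3\gamma\max(u-\psi,0)^2\ge 0$; since $\Omega\subset\R^2$, the embeddings $H_0^1(\Omega)\hookrightarrow L^s(\Omega)$ for all $s<\infty$ and $L^2(\Omega)\hookrightarrow H^{-1}(\Omega)$ show that the constraint map $e\colon\Qad\times H_0^1(\Omega)\to H^{-1}(\Omega)$, $e(q,u)=-\nabla\cdot(q\nabla u)+r(\gamma;u)-f$, is continuously Fréchet differentiable, and that the partial derivative $\partial_u e(q,u)\colon\delta u\mapsto-\nabla\cdot(q\nabla\delta u)+\partial_u r(\gamma;u)\,\delta u$ is a boundedly invertible operator from $H_0^1(\Omega)$ onto $H^{-1}(\Omega)$ by the Lax--Milgram lemma (coercivity from $q_{\min}I\preccurlyeq q$, the zeroth-order coefficient being nonnegative). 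The implicit function theorem then yields Fréchet differentiability of the control-to-state map $S_\gamma\colon\Qad\to H_0^1(\Omega)$, $q\mapsto u_\gamma$, and $\delta u:=S_\gamma'(q)\delta q$ is the unique solution of the linearized equation $-\nabla\cdot(q\nabla\delta u)+\partial_u r(\gamma;u)\,\delta u=\nabla\cdot(\delta q\,\nabla u)$ in $H^{-1}(\Omega)$.

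Let $(\bar q_\gamma,\bar u_\gamma)$ be the given local minimum and define the adjoint state $\bar p_\gamma\in H_0^1(\Omega)$ as the unique solution (again by Lax--Milgram) of $-\nabla\cdot(\bar q_\gamma\nabla\bar p_\gamma)+\partial_u r(\gamma;\bar u_\gamma)\,\bar p_\gamma=\bar u_\gamma-u_d$ in $H^{-1}(\Omega)$; this is precisely the second line of the claimed system after moving the zeroth-order term to the right-hand side, and together with the state equation it provides the first two lines. Differentiating the reduced functional $\hat J(q):=\tfrac12\|S_\gamma(q)-u_d\|^2+\tfrac\alpha2\|q\|^2$ at $\bar q_\gamma$ gives $\hat J'(\bar q_\gamma)\delta q=(\bar u_\gamma-u_d,\delta u)+\alpha(\bar q_\gamma,\delta q)$ with $\delta u=S_\gamma'(\bar q_\gamma)\delta q$. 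Testing the adjoint equation with $\delta u$ and the linearized state equation with $\bar p_\gamma$, the bilinear forms on the left coincide, whence $(\bar u_\gamma-u_d,\delta u)=-(\delta q\,\nabla\bar u_\gamma,\nabla\bar p_\gamma)=-(\nabla\bar u_\gamma\otimes\nabla\bar p_\gamma,\delta q)$, the last identity using that $\delta q$ is symmetric. Hence $\hat J'(\bar q_\gamma)\delta q=(\alpha\bar q_\gamma-\nabla\bar u_\gamma\otimes\nabla\bar p_\gamma,\delta q)$.

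Finally, since $\Qad$ is convex and $\bar q_\gamma$ is a local minimizer, the standard variational argument (as in the proof of Lemma~\ref{lem:fonvi}) gives $\hat J'(\bar q_\gamma)(q-\bar q_\gamma)\ge 0$ for all $q\in\Qad$, that is, the third line $(\alpha\bar q_\gamma-\nabla\bar u_\gamma\otimes\nabla\bar p_\gamma)(q-\bar q_\gamma)\ge 0$. Collecting the state equation, the adjoint equation, and this variational inequality establishes the assertion.

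I expect the main technical point to be the Fréchet differentiability of $S_\gamma$, i.e.\ verifying that the Nemytskii operator $u\mapsto\max(u-\psi,0)^3$ is continuously differentiable as a map $H_0^1(\Omega)\to H^{-1}(\Omega)$ with uniformly invertible linearization; this is where the restriction $\Omega\subset\R^2$ and the cubic (rather than lower) growth of the penalty — which is what makes $r(\gamma;\cdot)$ a $C^1$ Nemytskii operator — are used, and where the supporting regularity estimates of~\cite{SimonWollner:2023}, ensuring the reduced gradient $\alpha\bar q_\gamma-\nabla\bar u_\gamma\otimes\nabla\bar p_\gamma$ is a well-defined object, enter. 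The remaining steps — the three Lax--Milgram problems and the convex variational inequality for the control — are routine.
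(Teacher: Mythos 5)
The paper gives no proof of this proposition, deferring to~\cite{DeckelnickHinze:2011a} and a forthcoming paper; your argument is exactly the standard adjoint calculus those references employ (state equation, Lax--Milgram adjoint, duality pairing of the linearized state and adjoint equations, convexity of the admissible set), and it is correct, including the observation that the cubic penalty and $\Omega\subset\R^2$ make the Nemytskii operator $C^1$ from $H_0^1(\Omega)$ into $H^{-1}(\Omega)$. The one point worth tightening is that $\Qad$ has empty interior in $L^2(\Omega;\R^{2\times 2}_{\text{sym}})$, so the implicit function theorem should be invoked on the $L^\infty$-open set of uniformly positive definite coefficients (into which $\Qad$ embeds), or one should work directly with directional derivatives along the admissible segments $\bar q_\gamma+t(q-\bar q_\gamma)$ --- which is all the final variational inequality actually requires.
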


By passing to the limit with $\gamma \to \infty$ we can utilize these conditions to formulate optimality conditions for the original problem~\eqref{eq:opt}. First we consider limits of the variables corresponding to the solutions of the regularized Problem~\eqref{eq:Regularized-Obstacle-Problem}.

\begin{theorem}\label{thm:OptimalityConditions}
  If $\gamma \to \infty$, then there is a subsequence of solutions $(\bar{q}_\gamma , \bar{u}_\gamma)$ to problem~\eqref{eq:Regularized-Obstacle-Problem}, with corresponding adjoint $p_{\gamma} \in H_0^1(\Omega)$ as defined in Proposition~\ref{prop:OptSystemRegularized}, such that
  \begin{subequations}
    \begin{align*}
      \bar{q}_{\gamma} &\to \bar{q}  &&\text{ in } L^p \left( \Omega, \mathbb{R}^{2 \times 2}_{\text{sym}} \right) \text{ for all } 2 \leq p < \infty,\\
      \bar{u}_\gamma &\to \bar{u} &&\text{ in } W^{1,p} \left( \Omega \right) \text{ for an } 2 < p < \infty,\\
      \bar{p}_{\gamma} &\to \bar{p} &&\text{ in }  W^{1,p} \left( \Omega \right) \text{ for an } 2 < p < \infty,\\
      \bar{q}_{\gamma} \nabla \bar{u}_\gamma &\to \bar{q} \nabla \bar{u} &&\text{ in } L^2 \left( \Omega, \mathbb{R}^2 \right),\\
      \bar{q}_{\gamma} \nabla \bar{p}_{\gamma} &\to \bar{q} \nabla \bar{p} &&\text{ in } L^2 \left( \Omega, \mathbb{R}^2 \right),\\
      \nabla \bar{u}_{\gamma} \otimes \nabla \bar{p}_{\gamma} &\to \nabla \bar{u} \otimes \nabla \bar{p} &&\text{ in } L^p \left( \Omega, \mathbb{R}^{2 \times 2}_{\text{sym}} \right) \text{ for an } 1 < p < \infty ,\\
      r ( \gamma, \bar{u}_{\gamma}) &\to \bar{\lambda} &&\text{ in } H^{-1} \left( \Omega \right),\\
      \partial_u r( \gamma, \bar{u}_{\gamma}) \bar{p}_{\gamma} &\to \bar{\mu} &&\text{ in } H^{-1} \left( \Omega \right)
    \end{align*}
  \end{subequations}
  for some $(\bar{q}, \bar{u}, \bar{p}, \bar{\lambda}, \bar{\mu}) \in Q^{\rm{ad}} \times H^1_0 (\Omega) \times H^1_0 (\Omega) \times L^2(\Omega) \times L^2 (\Omega)$.
\end{theorem}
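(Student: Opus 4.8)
The plan is to extract convergent subsequences from the a~priori bounds on the regularized solutions and then identify the limits using the same H-convergence machinery that powered Theorem~\ref{thm:existence}, supplemented by the higher integrability estimates announced in~\cite{SimonWollner:2023}. First I would fix a minimizing setup: since $(\bar q_\gamma,\bar u_\gamma)$ are optimal for~\eqref{eq:Regularized-Obstacle-Problem}, comparing the optimal value with a fixed admissible pair (e.g. $q\equiv q_{\min}I$ and its regularized state) bounds $J(\bar q_\gamma,\bar u_\gamma)$ uniformly in $\gamma$; in particular $\|\bar q_\gamma\|_2$ and $\|\bar u_\gamma\|_2$ are bounded. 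The $\Qad$-constraint already gives a uniform $L^\infty$, hence $L^p$ for every $p<\infty$, bound on $\bar q_\gamma$, and testing the state equation with $\bar u_\gamma$ together with the monotonicity of $r$ (so that $(r(\gamma,\bar u_\gamma),\bar u_\gamma)\ge 0$ up to the $\psi$-shift, which is controlled) yields a uniform $H_0^1$-bound on $\bar u_\gamma$. The crucial extra ingredient, taken from~\cite{SimonWollner:2023}, is the uniform-in-$\gamma$ estimate $\|\bar u_\gamma\|_{W^{1,p}}\le C$ for some $p>2$; this is what makes $\nabla\bar u_\gamma\otimes\nabla\bar p_\gamma$ live in a space better than $L^1$ and is also needed for the adjoint. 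For the adjoint $\bar p_\gamma$ I would test the second equation of Proposition~\ref{prop:OptSystemRegularized} with $\bar p_\gamma$, use $\partial_u r(\gamma,\bar u_\gamma)\ge 0$ to drop the zeroth-order term favorably, and close an $H_0^1$-bound, then invoke the analogous $W^{1,p}$-regularity from~\cite{SimonWollner:2023} (the right-hand side $\bar u_\gamma-u_d$ is bounded in $L^2$, indeed in $W^{1,p}$ via $\bar u_\gamma$).

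With these bounds in hand, I would extract in the following order: a weak-$*$ $L^\infty$ and weak $W^{1,p}$-limit $\bar q$ of $\bar q_\gamma$, which by the Rellich--Kondrachov theorem is strong in $L^p$ for all $p<\infty$ (using $d=2$ and the compact embedding $W^{1,p}(\Omega)\hookrightarrow\hookrightarrow L^p(\Omega)$) — this gives the first convergence; weak $W^{1,p}$-limits $\bar u$ and $\bar p$, again compactly embedded so strong in $L^p$, $p<\infty$, and in particular $\nabla\bar u_\gamma\rightharpoonup\nabla\bar u$, $\nabla\bar p_\gamma\rightharpoonup\nabla\bar p$ in $L^p$. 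To pass to the limit in the products $\bar q_\gamma\nabla\bar u_\gamma$ and $\bar q_\gamma\nabla\bar p_\gamma$ I would verify the hypotheses of the $H$-convergence definition: $\bar u_\gamma\rightharpoonup\bar u$ in $U$ is established, and $\nabla\cdot(\bar q_\gamma\nabla\bar u_\gamma)=r(\gamma,\bar u_\gamma)-f$; boundedness of $r(\gamma,\bar u_\gamma)$ in $H^{-1}$ (which follows from the equation and the $H_0^1$-bound on $\bar u_\gamma$) plus, via the $W^{1,p}$-regularity, boundedness in a space compactly embedded into $H^{-1}$, gives strong convergence $\nabla\cdot(\bar q_\gamma\nabla\bar u_\gamma)\to\bar\lambda-f$ in $U^*$ along a further subsequence, and then $H$-compactness of $\Qad$ (\cite[Thm.~1.2.16]{Allaire:2002}) together with the identified $H$-limit forces $\bar q_\gamma\nabla\bar u_\gamma\rightharpoonup\bar q\nabla\bar u$ in $L^2$; the same argument applied to the adjoint equation (whose right-hand side $\bar u_\gamma-u_d-\partial_u r(\gamma,\bar u_\gamma)\bar p_\gamma$ is bounded in $H^{-1}$) gives $\bar q_\gamma\nabla\bar p_\gamma\rightharpoonup\bar q\nabla\bar p$. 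The convergence $\nabla\bar u_\gamma\otimes\nabla\bar p_\gamma\to\nabla\bar u\otimes\nabla\bar p$ in $L^p$ for some $p>1$ then follows from the $W^{1,p}$-bounds: with $\nabla\bar u_\gamma$ bounded in $L^{p_1}$ and $\nabla\bar p_\gamma$ in $L^{p_2}$, $p_1,p_2>2$, the product is bounded in $L^{p}$, $1/p=1/p_1+1/p_2<1$, and one upgrades weak to strong using the strong $L^{p_i}$-convergence of at least one factor on a subsequence (e.g. strong convergence of $\nabla\bar u_\gamma$ obtained by a compactness/monotonicity argument for the state equation combined with $H$-convergence, or simply by the strong convergence of $\bar q_\gamma\nabla\bar u_\gamma$ and of $\bar q_\gamma$). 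Finally $r(\gamma,\bar u_\gamma)\rightharpoonup\bar\lambda$ and $\partial_u r(\gamma,\bar u_\gamma)\bar p_\gamma\rightharpoonup\bar\mu$ in $H^{-1}$ are immediate from the $H^{-1}$-bounds just used, and membership $\bar\lambda,\bar\mu\in L^2(\Omega)$ follows from the sharper $L^2$-bound on $r(\gamma,\bar u_\gamma)=f+\nabla\cdot(\bar q_\gamma\nabla\bar u_\gamma)$ as in~\eqref{eq:lambda-bound} (using the $W^{1,p}$, hence $L^2$ for $\nabla\cdot(\bar q_\gamma\nabla\bar u_\gamma)$, regularity from~\cite{SimonWollner:2023}), and similarly for the adjoint term; $\bar q\in\Qad$ because $\Qad$ is convex and closed, hence weakly closed. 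A diagonal argument over the countably many $p$'s in the first line collects everything on one subsequence.

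The main obstacle I expect is establishing, uniformly in the regularization parameter $\gamma$, the higher integrability $\bar u_\gamma,\bar p_\gamma\in W^{1,p}$ for some fixed $p>2$ — this does not come from testing alone but requires a Meyers-type estimate for the equation $-\nabla\cdot(q_\gamma\nabla\cdot)=g$ with merely $L^\infty$, not continuous, coefficients, on a Gröger-regular domain, with the constant independent of $\gamma$ (which in turn needs the penalty term $r(\gamma,\bar u_\gamma)$ and $\partial_u r(\gamma,\bar u_\gamma)\bar p_\gamma$ to be bounded in the right negative Sobolev space uniformly, using the nonnegativity and the $H_0^1$-bounds). This is precisely the content deferred to~\cite{SimonWollner:2023}, so in the present chapter I would cite those regularity estimates as a black box and focus the argument on the subsequence extraction and the $H$-convergence passage to the limit described above; a secondary, more routine, difficulty is bookkeeping the exact exponents $p$ so that all the stated convergences hold simultaneously, which the diagonal/subsequence argument handles.
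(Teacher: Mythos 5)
Your overall skeleton (uniform bounds from optimality and from $\Qad$, $W^{1,p}$-regularity with $p>2$ imported as a black box from~\cite{SimonWollner:2023}, subsequence extraction, $H$-convergence to pass to the limit in the state equation) matches the strategy the chapter alludes to; note, however, that the chapter itself defers the proof to~\cite{SimonWollner:2023} and a forthcoming paper, so the comparison is against the intended argument rather than a written one. Within that skeleton there is one genuinely broken step: you obtain the strong convergence $\bar q_\gamma\to\bar q$ in $L^p$ by extracting ``a weak $W^{1,p}$-limit of $\bar q_\gamma$'' and applying Rellich--Kondrachov. No such bound exists: elements of $\Qad$ are merely $L^\infty$ symmetric-matrix fields with pointwise spectral bounds; there is no control whatsoever on $\nabla\bar q_\gamma$, so compact Sobolev embeddings are unavailable. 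Since the strong convergence of the control is the linchpin (it is what makes the $H$-limit coincide with the weak limit, what upgrades $\bar q_\gamma\nabla\bar u_\gamma\rightharpoonup\bar q\nabla\bar u$ to the \emph{strong} $L^2$ convergence asserted in the theorem, and what, via the coercivity estimate $q_{\min}\|\nabla(\bar u_\gamma-\bar u)\|^2\le(\bar q_\gamma\nabla(\bar u_\gamma-\bar u),\nabla(\bar u_\gamma-\bar u))$ together with energy convergence, yields strong convergence of the gradients), this gap propagates through almost every strong-convergence claim in the statement. Your fallback remark that strong convergence of $\nabla\bar u_\gamma$ follows ``by the strong convergence of $\bar q_\gamma\nabla\bar u_\gamma$ and of $\bar q_\gamma$'' is circular, and $H$-convergence alone cannot supply it: oscillating coefficients produce oscillating gradients, which is exactly the phenomenon $H$-convergence is designed to accommodate, not exclude.

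The missing idea is to get strong $L^2$-convergence of the control from the \emph{optimization} structure rather than from compact embeddings. Two standard routes: (i) a value-convergence argument --- for global minimizers, compare $J(\bar q_\gamma,\bar u_\gamma)$ with a recovery sequence for any fixed admissible $q$ to obtain $\limsup_\gamma J(\bar q_\gamma,\bar u_\gamma)\le\inf$, combine with weak/$H$-lower semicontinuity (\cite[Lemma~2.1]{DeckelnickHinze:2011a}) to conclude $\|\bar q_\gamma\|\to\|\bar q\|$, and use that weak convergence plus norm convergence in the Hilbert space $L^2$ implies strong $L^2$ convergence, which the uniform $L^\infty$ bound then upgrades to every $L^p$, $p<\infty$; or (ii) read the gradient inequality of Proposition~\ref{prop:OptSystemRegularized} as the pointwise projection formula $\bar q_\gamma=P_{\Qad}\bigl(\alpha^{-1}\nabla\bar u_\gamma\otimes\nabla\bar p_\gamma\bigr)$ and transfer compactness from the product of gradients to the control. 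Either way, the order of your argument must be rearranged so that strong convergence of $\bar q_\gamma$ is \emph{derived} before (or together with) the strong convergence of $\nabla\bar u_\gamma$ and $\nabla\bar p_\gamma$, rather than assumed via a nonexistent Sobolev bound. The remaining ingredients of your proposal (the sign argument $(r(\gamma,\bar u_\gamma),\bar u_\gamma)\ge0$ using $\psi>0$, the $L^2$ bound on $r(\gamma,\bar u_\gamma)$ by testing with powers of $(\bar u_\gamma-\psi)_+$ in the spirit of~\eqref{eq:lambda-bound}, the compact embedding $L^2\hookrightarrow H^{-1}$, and the diagonal extraction over exponents) are sound, though the uniform $L^2$ bound on $\partial_u r(\gamma,\bar u_\gamma)\bar p_\gamma$ needed for $\bar\mu\in L^2$ also requires a dedicated test-function argument in the adjoint equation that you only gesture at.
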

Based on these limits we can formulate a set of limiting optimality conditions for the original problem.
\begin{theorem}
  Any limit point $(\bar{q}, \bar{u}, \bar{p}, \bar{\lambda},
  \bar{\mu}) \in Q^{\rm{ad}} \times H^1_0 (\Omega) \times H^1_0
  (\Omega) \times L^2(\Omega) \times L^2 (\Omega)$ obtained in
  Theorem~\ref{thm:OptimalityConditions}, fulfills the first order optimality system
  \begin{subequations}
    \begin{align*}
      - \nabla \cdot \left( \bar{q} \nabla \bar{u} \right) &= f -
      \bar{\lambda} & & \text{in } H^{-1}(\Omega),\\
      \bar{u} &\leq \psi &&\text{q.e. in } \Omega,\\
      \bar{\lambda} &\geq 0 &&\text{in } H^{-1}(\Omega),\\
       \left( \bar{\lambda}, \bar{u} - \psi \right) &= 0,\\
      - \nabla \cdot \left( \bar{q} \nabla \bar{p} \right) &= \bar{u} - u_d - \bar{\mu} & & \text{in } H^{-1}(\Omega),\\
      \left( \bar{p} , \bar{\lambda} \right) &= 0,\\
      \left( \bar{\mu}, \bar{u}-\psi \right) &= 0,\\
      \left( \bar{p}, \bar{\mu} \right) &\geq 0,\\
      \left( \alpha \bar{q} - \nabla \bar{u} \otimes \nabla \bar{p} \right) (q - \bar{q}) &\geq 0 &&\forall q \in Q^{\rm{ad}}.
    \end{align*}
  \end{subequations}
\end{theorem}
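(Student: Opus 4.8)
The plan is to obtain the limiting system by passing to the limit $\gamma\to\infty$ in the optimality conditions of Proposition~\ref{prop:OptSystemRegularized}, evaluated along the subsequence furnished by Theorem~\ref{thm:OptimalityConditions} with limit point $(\bar q,\bar u,\bar p,\bar\lambda,\bar\mu)$. Besides the convergences listed there, I would invoke two additional ingredients. First, the uniform $L^2$-bounds $\|r(\gamma,\bar u_\gamma)\|\le C$ and $\|\partial_u r(\gamma,\bar u_\gamma)\bar p_\gamma\|\le C$, which follow from the regularity estimates of~\cite{SimonWollner:2023} in the spirit of~\eqref{eq:lambda-bound}; together with the stated strong $H^{-1}$-convergence and uniqueness of limits they upgrade to $r(\gamma,\bar u_\gamma)\rightharpoonup\bar\lambda$ and $\partial_u r(\gamma,\bar u_\gamma)\bar p_\gamma\rightharpoonup\bar\mu$ weakly in $L^2(\Omega)$. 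Second, the energy estimate obtained by testing the regularized adjoint equation with $\bar p_\gamma$, which yields $\int_\Omega\gamma\,(\max(\bar u_\gamma-\psi,0))^2\bar p_\gamma^2\dx\le C$, since the principal part is coercive and the right-hand side $(\bar u_\gamma-u_d,\bar p_\gamma)$ is bounded.

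I would first obtain the two differential equations, the obstacle constraint, and the sign conditions by direct passage to the limit. Since $\bar q_\gamma\nabla\bar u_\gamma\to\bar q\nabla\bar u$ and $\bar q_\gamma\nabla\bar p_\gamma\to\bar q\nabla\bar p$ in $L^2(\Omega;\R^2)$, the divergence terms converge in $H^{-1}(\Omega)$, and together with $r(\gamma,\bar u_\gamma)\to\bar\lambda$, $\partial_u r(\gamma,\bar u_\gamma)\bar p_\gamma\to\bar\mu$ in $H^{-1}(\Omega)$ and $\bar u_\gamma\to\bar u$ in $L^2(\Omega)$ this gives $-\nabla\cdot(\bar q\nabla\bar u)=f-\bar\lambda$ and $-\nabla\cdot(\bar q\nabla\bar p)=\bar u-u_d-\bar\mu$ in $H^{-1}(\Omega)$. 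The bound $\gamma\,\|\max(\bar u_\gamma-\psi,0)\|_{L^6(\Omega)}^3=\|r(\gamma,\bar u_\gamma)\|\le C$ forces $\max(\bar u_\gamma-\psi,0)\to0$ in $L^6(\Omega)$; since also $\bar u_\gamma\to\bar u$ in $L^6(\Omega)$ and the positive-part map is continuous, $\max(\bar u-\psi,0)=0$, i.e.\ $\bar u\le\psi$ a.e., hence q.e.\ because in two dimensions $\bar u\in W^{1,p}(\Omega)$ with $p>2$ is continuous. From $r(\gamma,\bar u_\gamma)\ge0$ and the weak $L^2$-convergence we get $\bar\lambda\ge0$, and from $(\partial_u r(\gamma,\bar u_\gamma)\bar p_\gamma,\bar p_\gamma)=3\gamma\int_\Omega(\max(\bar u_\gamma-\psi,0))^2\bar p_\gamma^2\dx\ge0$ together with weak--strong convergence in $L^2(\Omega)$ we get $(\bar p,\bar\mu)\ge0$.

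The hard part will be the three complementarity relations $(\bar\lambda,\bar u-\psi)=0$, $(\bar p,\bar\lambda)=0$ and $(\bar\mu,\bar u-\psi)=0$, where the cubic growth of $r$ must be exploited. One computes $(r(\gamma,\bar u_\gamma),\bar u_\gamma-\psi)=\gamma\,\|\max(\bar u_\gamma-\psi,0)\|_{L^4(\Omega)}^4$, which tends to $0$: H\"older gives $\|\max(\bar u_\gamma-\psi,0)\|_{L^4}^4\le|\Omega|^{1/3}\|\max(\bar u_\gamma-\psi,0)\|_{L^6}^4$, while $\gamma\,\|\max(\bar u_\gamma-\psi,0)\|_{L^6}^4=(\gamma\,\|\max(\bar u_\gamma-\psi,0)\|_{L^6}^3)\,\|\max(\bar u_\gamma-\psi,0)\|_{L^6}\le C\cdot o(1)$. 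Passing to the limit with $r(\gamma,\bar u_\gamma)\rightharpoonup\bar\lambda$ and $\bar u_\gamma-\psi\to\bar u-\psi$ in $L^2(\Omega)$ yields $(\bar\lambda,\bar u-\psi)=0$. Next, the quantities $(r(\gamma,\bar u_\gamma),\bar p_\gamma)=\gamma\int_\Omega(\max(\bar u_\gamma-\psi,0))^3\bar p_\gamma\dx$ and $(\partial_u r(\gamma,\bar u_\gamma)\bar p_\gamma,\bar u_\gamma-\psi)=3\gamma\int_\Omega(\max(\bar u_\gamma-\psi,0))^3\bar p_\gamma\dx$ coincide up to the factor $3$, and by Cauchy--Schwarz their common value is bounded in absolute value by the product of $(\gamma\int_\Omega(\max(\bar u_\gamma-\psi,0))^2\bar p_\gamma^2\dx)^{1/2}$, controlled by the adjoint energy estimate, and $(\gamma\int_\Omega(\max(\bar u_\gamma-\psi,0))^4\dx)^{1/2}=o(1)$ by the computation just made. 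Hence this value vanishes in the limit, and weak--strong convergence --- $r(\gamma,\bar u_\gamma)\to\bar\lambda$ in $H^{-1}(\Omega)$ paired with $\bar p_\gamma\to\bar p$ in $H^1_0(\Omega)$, and $\partial_u r(\gamma,\bar u_\gamma)\bar p_\gamma\rightharpoonup\bar\mu$ in $L^2(\Omega)$ paired with $\bar u_\gamma-\psi\to\bar u-\psi$ in $L^2(\Omega)$ --- delivers $(\bar p,\bar\lambda)=0$ and $(\bar\mu,\bar u-\psi)=0$.

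Finally, the gradient inequality follows by passing to the limit in the variational inequality of Proposition~\ref{prop:OptSystemRegularized}: $\bar q_\gamma\to\bar q$ in every $L^s(\Omega)$ with $s<\infty$, $\nabla\bar u_\gamma\otimes\nabla\bar p_\gamma\to\nabla\bar u\otimes\nabla\bar p$ in $L^p(\Omega)$ for some $p>1$, and the test directions $q-\bar q_\gamma$ are bounded in $L^\infty(\Omega)$ while converging in the conjugate exponent, so the pairing $(\alpha\bar q_\gamma-\nabla\bar u_\gamma\otimes\nabla\bar p_\gamma,q-\bar q_\gamma)$ converges to $(\alpha\bar q-\nabla\bar u\otimes\nabla\bar p,q-\bar q)$ and its nonnegativity is preserved. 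The essential difficulty, as indicated, is concentrated in the complementarity step: the raw $H^{-1}$-convergence of the penalty terms is too weak, so one must first secure the sharp $L^2$-bound on $r(\gamma,\bar u_\gamma)$ and the adjoint energy estimate and then exploit the precise cubic growth of $r$ through the interpolation inequality to make the penalty-weighted integrals vanish; this is exactly where the planar $W^{1,p}$-regularity theory of~\cite{SimonWollner:2023} enters, and it also accounts for the restriction $\Omega\subset\R^2$.
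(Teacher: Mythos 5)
The paper itself supplies no proof of this theorem --- Section~\ref{sec:optcond_regular} explicitly defers all proofs to \cite{SimonWollner:2023} and a forthcoming publication --- so there is no in-paper argument to compare against. Your proposal follows the route the paper clearly intends, namely passing to the limit in the regularized optimality system of Proposition~\ref{prop:OptSystemRegularized} along the subsequence of Theorem~\ref{thm:OptimalityConditions}, and the details you give are sound: the identity $\|r(\gamma,\bar u_\gamma)\|=\gamma\|\max(\bar u_\gamma-\psi,0)\|_{L^6}^3$, the interpolation step showing $\gamma\|\max(\bar u_\gamma-\psi,0)\|_{L^4}^4=o(1)$, the observation that $(r(\gamma,\bar u_\gamma),\bar p_\gamma)$ and $\tfrac13(\partial_u r(\gamma,\bar u_\gamma)\bar p_\gamma,\bar u_\gamma-\psi)$ coincide and are killed by Cauchy--Schwarz against the adjoint energy estimate, and the weak--strong pairings for the limits all check out. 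The one place where you lean on something you do not establish is the uniform $L^2$-bound on $\partial_u r(\gamma,\bar u_\gamma)\bar p_\gamma$: unlike the bound on $r(\gamma,\bar u_\gamma)$, it does not follow from the adjoint energy estimate alone (a na\"ive H\"older estimate produces a factor $\gamma^{2/3}$), and since $\psi$ is a nonzero constant you genuinely need weak $L^2$-convergence rather than the stated $H^{-1}$-convergence to pair with $\bar u_\gamma-\psi\notin H^1_0(\Omega)$; you correctly flag this as imported from the regularity estimates of \cite{SimonWollner:2023} (and it is implicit in the assertion $\bar\mu\in L^2(\Omega)$ of Theorem~\ref{thm:OptimalityConditions}), but be aware it is a nontrivial ingredient rather than a routine one.
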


\section{Numerical Experiments}\label{sec:numerics}

In this section, we present numerical results on an obstacle problem with coefficient control and its regularization.
As a basis we consider an inverse problem for the estimation of matrix
coefficients in an elliptic pde as has been studied
in~\cite{DeckelnickHinze:2011a}. This is an optimal control problem
with coefficient control, which we further modify by introducing an
obstacle $\psi$, adjusting the objective and adding a barrier term to
handle the condition $q \in Q^{\rm{ad}}$.
\subsection{Example 1}\label{sec:obstaclenumericsEx1}
The resulting problem on the domain $\Omega = (-1,1)^2 \subset \mathbb{R}^2$ is then given by
\begin{equation}\label{eq:CoefficientControlWObstacle}\tag{$P^{\rm{MEst}}$}
  \begin{aligned}
    \min_{q \in L^2(\Omega, \mathbb{R}^{2 \times 2}_{\text{sym}}),u \in H_0^1 \left( \Omega \right) } ~ &J \left( q,u \right) + \beta B(q) && \\
    \text{s.t. } &- \nabla \cdot \left( q \nabla u \right) = f - \lambda &&  \text{ in } H^{-1} \left( \Omega \right),\\
    &u \leq \psi && \text{ q.e. in } \Omega,\\
    &\lambda \geq 0 && \text{ in } H^{-1} \left( \Omega \right),\\
    &\left( \lambda, u-\psi \right) = 0,
  \end{aligned}
\end{equation}
with given $\psi > 0$, $\psi \in \mathbb{R}$ and
\begin{equation}
  f(x_1,x_2)=\left( 1 - x_2^2 \right) \left( 6x_1^2 + 2 \right) + 2
  \left( 1-x_1^2 \right).
\end{equation}
To enforce $q \in Q^{\rm{ad}}$, we introduce a logarithmic barrier term
\begin{align*}
  - B(q) &= \int_\Omega \log \left( \rm{det} \left( q - q_{\min} I
    \right) \right) + \log \left( \rm{det} \left( q_{\max} I - q
    \right) \right) \,\rm{d}x,
\end{align*}
with $q_{\min} = 0.5$, $q_{\max} = 10$ into the objective with a, small,
barrier parameter $\beta >0$.
While $B$ is clearly a barrier for the admissible control set
$Q^{\rm{ad}}$ during the iterations one must assert that indeed the
iterates remain within $Q^{\rm{ad}}$ as $B$ can be finite outside of
$Q^{\rm{ad}}$. To do so, the trace of the matrices is monitored as in
two dimensions a matrix is positive definite if its determinant and
trace are positive.

The objective is given by
\begin{equation*}
  J \left( q,u \right) = \frac{1}{2} \| u - u_d \|^2_2 + \frac{\alpha}{2} \left\| q - q_d \right\|_2^2 
\end{equation*}
with desired state 
\begin{equation*}
  u_d(x_1, x_2)=(1-x_1^2)(1-x_2^2),
\end{equation*}
and desired control
\begin{equation*}
  q_d(x_1, x_2)= \begin{pmatrix}
  1 + x_1^2 & 0\\
  0 & 1
\end{pmatrix}
\end{equation*}
following~\cite{DeckelnickHinze:2011a} with the additional
introduction of $q_d$.
This is done so that without the introduction of an obstacle, the desired solution $(q_d, u_d)$ would be the optimal solution of~\eqref{eq:CoefficientControlWObstacle} with objective $J(q,u)$.

The problem setting has been implemented in C++ using the DOpElib optimization suite, see~\cite{dopelib}, which uses the deal.II finite element library, 
see~\cite{dealII94,deal2020}. For our finite element approximations, we utilize a uniform mesh dependent on refinement level $l \geq 0$ that is constructed of $2^l \times 2^l$ quadratic cells of size $h$. To compute discretized solutions $(q_h, u_h)$, 
we utilize piecewise bilinear finite elements.
\begin{figure}[h!]
  \includegraphics[width=11.5cm]{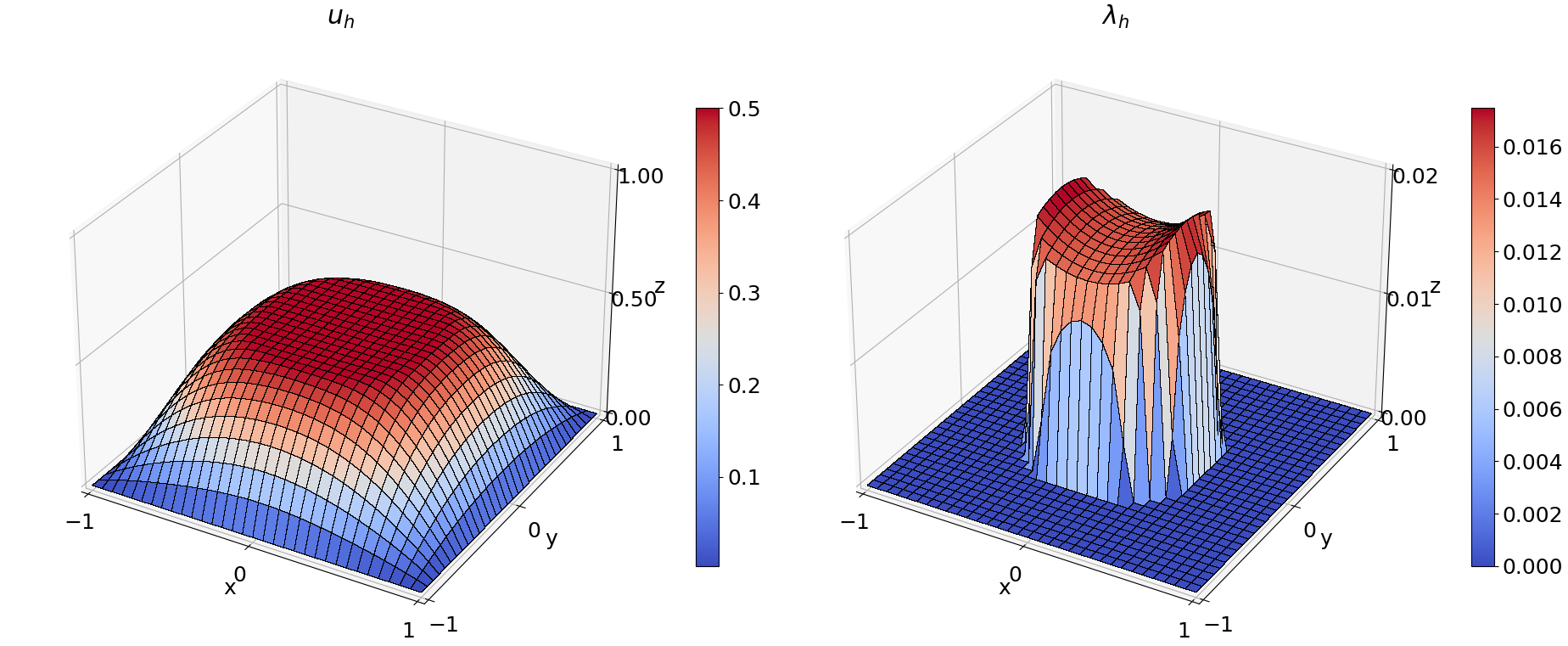}
  \centering
  \caption{State solution $(u_h,\lambda_h)$ for Problem~\eqref{eq:CoefficientControlWObstacle} at refinement level $l =5$}
  \label{fig:stateSolution}
\end{figure}
\begin{figure}[h!]
  \includegraphics[width=11.5cm]{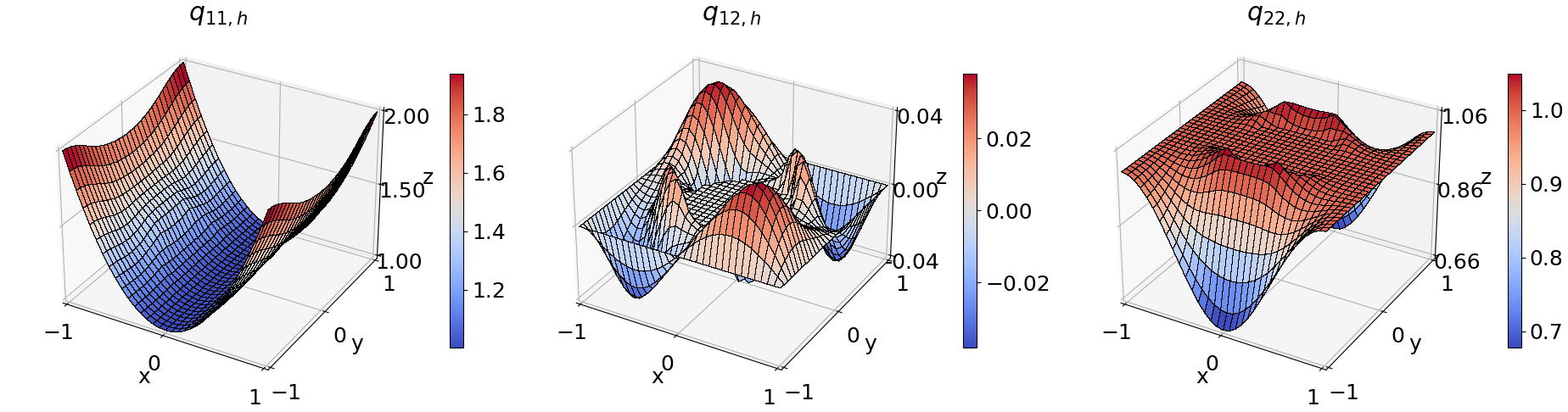}
  \centering
  \caption{Control solution $q_h$ for Problem~\eqref{eq:CoefficientControlWObstacle} at refinement level $l =5$}
  \label{fig:controlSolution}
\end{figure}

In this first problem, we compute solutions for~\eqref{eq:CoefficientControlWObstacle} on $\Omega = (-1,1)^2$ with obstacle $\psi = 0.5$ at refinement levels $l = 5$. Here we have weighted the Tikhonov term in the objective with $\alpha = 0.1$ and the barrier with $\beta = 0.0001$. We have chosen
\begin{align*}
  q^{\rm{init}} = \begin{pmatrix} 2 & -1 \\ -1 & 2 \end{pmatrix}
\end{align*}
as the initial control. To implement the obstacle we have equivalently reformulated the set of complementarity constraints into
\begin{align*}
  \lambda - \max \{0,  \lambda + c( u-\psi ) \} = 0 \iff u \leq \psi, ~ \lambda \geq 0, ~ \left( \lambda, u- \psi\right) = 0,
\end{align*}
for arbitrary $c > 0$. In the computations, we have chosen this parameter as $c=1$.\\
Figure~\ref{fig:stateSolution} shows the state solution $u_h$ and the
associated Lagrange multiplier $\lambda_h$ of this problem. We can
observe the obstacle acting as a constraint on the state, preventing
the state $u_h$ from achieving the desired solution $u_d$. Note that, since the Lagrange-multiplier $\lambda_h$ acts as a slack variable, it allows us to observe the area in which the obstacle constraint is active. The effects on the corresponding control solution $q_h$ are illustrated in Figure~\ref{fig:controlSolution}. 


\subsection{Example 2}\label{sec:obstaclenumericsEx2}
To study the effects of the regularization, we use the regularized problem formulation. It is given by
\begin{equation}\label{eq:RegCoefficientControlWObstacle}\tag{$P^{\rm{MEst}}_{\gamma}$}
  \begin{aligned}
    \min_{q_\gamma \in L^2(\Omega),u_\gamma \in H_0^1 \left( \Omega \right) } ~ &J \left( q_\gamma,u_\gamma \right) && \\
    \text{s.t. } - \nabla \cdot &\left( q_\gamma \nabla u_\gamma \right) + \gamma \max \left( u_\gamma - \psi, 0 \right)^3 = f &&  \text{ in } H^{-1} \left( \Omega \right)\\
  \end{aligned}
\end{equation}
with penalty parameter $\gamma > 0$. All other quantities as the domain $\Omega$, the parameters $\alpha,\beta$, and the obstacle $\psi$ are chosen as in Problem $\eqref{eq:CoefficientControlWObstacle}$. 
\begin{figure}[h!]
  \includegraphics[width=11.5cm]{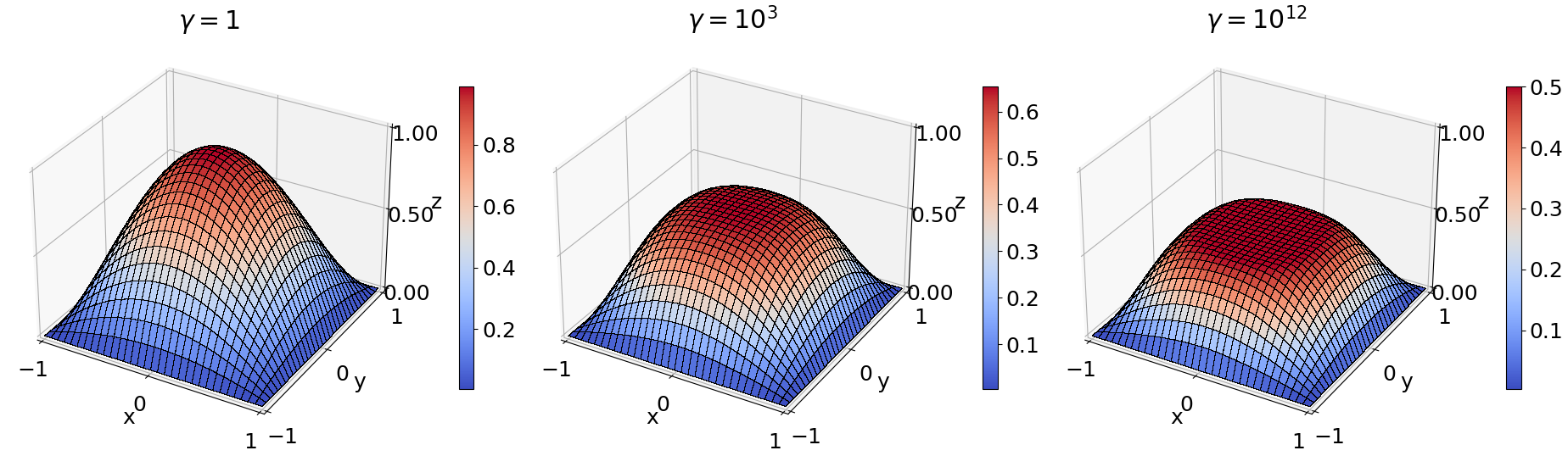}
  \centering
  \caption{Results for different choices of regularization parameter $\gamma$ on state solution $u_{h, \gamma}$ of Problem~\eqref{eq:RegCoefficientControlWObstacle} at refinement level $l = 5$}
  \label{fig:regEffect}
\end{figure}
\begin{figure}[h!]
  \includegraphics[width=11.5cm]{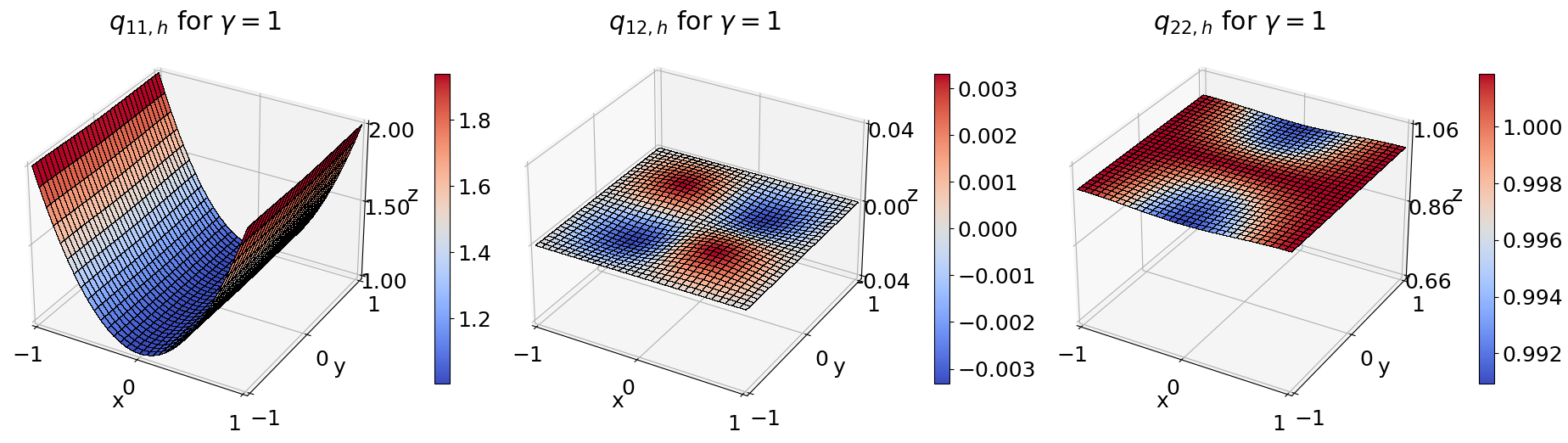}
  \centering
  \includegraphics[width=11.5cm]{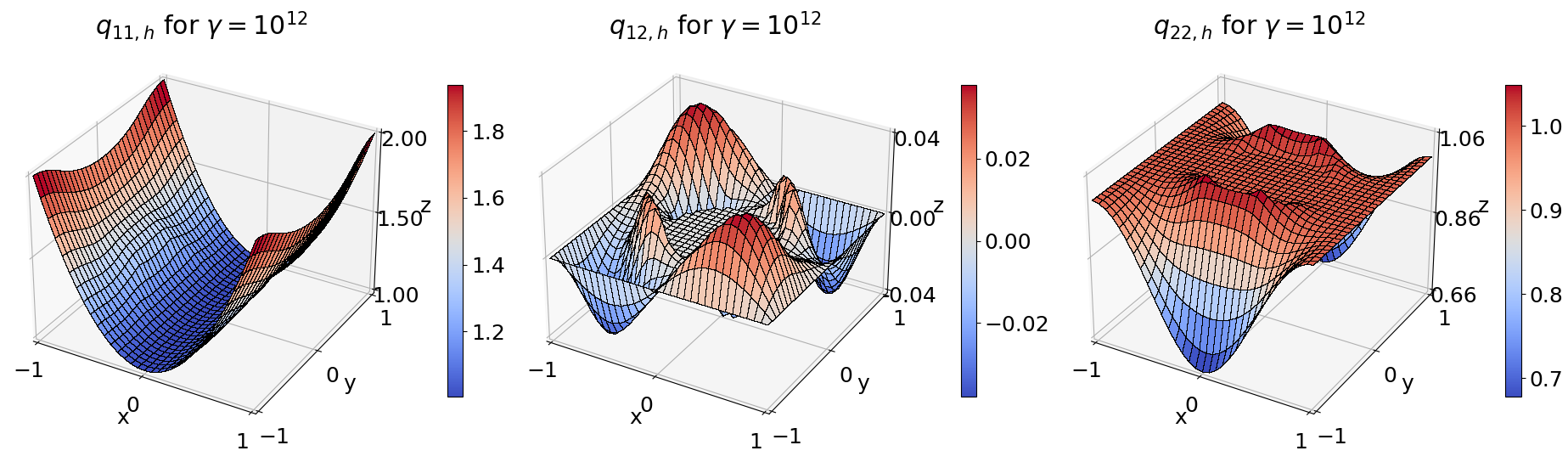}
  \centering
  \caption{Results for different choices of regularization parameter $\gamma$ on control solution $q_{h, \gamma}$ of Problem~\eqref{eq:RegCoefficientControlWObstacle} at refinement level $l = 5$.}
  \label{fig:regEffectControl}
\end{figure}

We also start our computations with the same initial control $q^{\rm{init}}$. We
now compute solutions $(q_{h, \gamma}, u_{h, \gamma})$
for~\eqref{eq:RegCoefficientControlWObstacle} at different values of
the regularization parameter $\gamma$. In Figure~\ref{fig:regEffect},
we have illustrated the impact of increasing $\gamma$ on the optimal
state solution, visibly enforcing the obstacle for higher values of
$\gamma$. We can also observe that the regularized control, see
Figure~\ref{fig:regEffectControl}, approximates the control solution
of obstacle problem~\eqref{eq:CoefficientControlWObstacle} for
increasing $\gamma$. This is supported by our numerical results when
comparing solutions of
Problem~\eqref{eq:RegCoefficientControlWObstacle} at
different regularization values with those of Problem~\eqref{eq:CoefficientControlWObstacle}, see Table~\ref{tb:Regularization-vs-Refinements} for error computations at different refinement levels.

\begin{table}[h!]
  \begin{center}
    \begin{tabular}{|c||c c | c c|} 
      \hline
      $\gamma$ & \quad   $\| u_{h_1, \gamma} - u_{h_1} \|_2$ \quad &  \quad $\| q_{h_1, \gamma} - q_{h_1}\|_2$ \quad & \quad $\| u_{h_2, \gamma} - u_{h_2} \|_2$  \quad & $\| q_{h_2,\gamma} - q_{h_2}\|_2$\\ [0.5ex] 
      \hline\hline
      $10^{0}$ & $3.79579 \cdot 10^{-1}$ & $1.8927 \cdot 10^{-1}$ & $3.79979 \cdot 10^{-1}$ & $1.87395 \cdot 10^{-1}$\\ 
      \hline
      $10^{3}$ & $1.41644 \cdot 10^{-1}$ & $6.47868 \cdot 10^{-2}$ & $1.42056 \cdot 10^{-1}$ & $6.98082 \cdot 10^{-2}$\\
      \hline
      $10^{6}$ & $1.59084 \cdot 10^{-2}$ & $7.76552 \cdot 10^{-3}$ & $1.62702 \cdot 10^{-2}$ & $8.16446 \cdot 10^{-3}$\\
      \hline
      $10^{9}$ & $1.6923 \cdot 10^{-3}$& $3.57287 \cdot 10^{-3}$ & $1.62867 \cdot 10^{-3}$ & $1.56167 \cdot 10^{-3}$\\
      \hline
      $10^{12}$ & $2.0242 \cdot 10^{-4}$ & $4.39335 \cdot 10^{-4}$ & $1.81648 \cdot 10^{-4}$ & $1.19951 \cdot 10^{-3}$\\
      \hline
    \end{tabular}
  \end{center}
  \caption{Difference between solution $(q_{h_i},u_{h_i})$ of Problem~\eqref{eq:CoefficientControlWObstacle} and solution $(q_{h_i, \gamma},u_{h_i,\gamma})$ of regularized Problem~\eqref{eq:RegCoefficientControlWObstacle} with $i=1,2$ for refinement levels $l_1 = 5$ and $l_2 = 7$.}
  \label{tb:Regularization-vs-Refinements}
\end{table}

\fracture{\section{Extension to Phase-Field Fracture}\label{sec:fracture}}

Let us introduce a free material optimization problem in the setting of fracture propagation, that is inspired by~\cite{HaslingerKocvaraLeugeringStingl:2010}. The overall goal is to control the behavior of the fracture by optimizing the stiffness tensor with a control in the coefficients and thus achieving a desired crack pattern.

\subsection{Problem Statement}
The state of the material is given by a pair $\boldu = (u, \varphi)$, where $u$ denotes a two dimensional displacement field and $\varphi$ a phase-field, i.e., a smooth indicator function for the fracture, cf.,~\cite{BourFraMar08, BourFraMar00}, with $\varphi = 0$ in the broken area, and $\varphi = 1$ in the intact area. The symmetric gradient $e(u)$ is defined as
\begin{align*}
  e_{ij}(u(x)) \coloneqq \frac{1}{2} \Big( \frac{du_i(x)}{dx_j} + \frac{du_j(x)}{dx_i} \Big), \quad i, j = 1, 2,
\end{align*}
and the strain by ${\sigma_q}_{ij} \coloneqq q_{ijkl} e_{kl}(u), \, i,j,k,l = 1,2$,
where $q_{ijkl}$ is the elastic/plane-stress stiffness tensor. In accordance with~\cite{HaslingerKocvaraLeugeringStingl:2010}, it is written as the symmetric material matrix
\begin{align}\label{Qdef}
  {q} = & \,  \begin{pmatrix}
    q_{1111}    & q_{1122} & \sqrt{2}q_{1112}\\
    & q_{2222} & \sqrt{2}q_{2212}\\
    \text{sym}  &          & 2 q_{1212}
  \end{pmatrix}.
\end{align}

As spatial domain $\Omega$, we choose the unit square $(0,1)^2 \subset
\mathbb{R}^2$ with a horizontal notch in the middle of the domain $[0,1]\times\{0.5\}$. Moreover, the Lipschitz boundary is partitioned into $\partial \Omega \coloneqq \Gamma_D \stackrel.\cup \Gamma_N \stackrel.\cup \Gamma_{\text{free}}$, where $\Gamma_D \coloneqq [0,1] \times \{0\}$ and $\Gamma_N \coloneqq [0,1] \times \{1\}$. 
On $\Gamma_D$ we enforce homogeneous Dirichlet boundary conditions for
the displacement $u$. We have homogeneous Neumann boundary conditions
for the phase-field and the initial condition $\varphi \equiv 1$ in
$\Omega$ at $t=0$.
Further, let
$\mathbf{f}_{|\Gamma_N} \in L^2(\Gamma_N)$ be a stationary external orthogonal
force. We consider a time-discrete model formulation on the time interval $[0,1]$ with $M+1$ equidistant time points, i.e., $0 = t_0 < t_1 < ... < t_M = 1$. Consequently the state is given by $\boldu = (\boldu^i)^M_{i=1} = (u^i, \varphi^i)^M_{i=1}$. However, we will assume both the material matrix $q$ and the external load $\mathbf{f}$ to be constant in time. 
The fracture problem then reads: For given material matrix $q \in L^2(\Omega, \R^{3 \times 3}_{sym})$, initial values $\boldu^0 \in V$ and right-hand side $\mathbf{f}$, find a state $\boldu^i \in V := H^1_D(\Omega; \R^2) \times H^1(\Omega)$ that solves for all $i = 1,...,M$,
\begin{equation}
  \label{PDEFrac}
  \tag{PDE$^{\rm{Frac}}$}
  \langle A(\boldu^i, q),\boldv\rangle + \langle
  R(\varphi^i;\gamma),v^\phi\rangle = \langle
  \mathbf{f},v^u\rangle_{\Gamma_N}, \quad \forall \boldv =
  (v^u,v^\phi) \in V.
\end{equation}
The operators $A(\boldu^i, q)$ and $R(\varphi^i;\gamma)$ are given by
\begin{align*}
  \langle A(\boldu^i, q), \boldv \rangle := & \, \Big( \big( ( 1 - \kappa)(\varphi^i)^2 + \kappa \big) \sigma_q(u^i), e(v^u) \Big)  \\
  & \, + \varepsilon G_c \big( \nabla \varphi^i, \nabla v^\varphi \Big) - \frac{G_c}{\varepsilon} \Big( 1 - \varphi^i, v^\varphi \Big) + \eta \Big( \varphi^i - \varphi^{i-1}, v^\varphi \Big) \\
  & \, + \Big( (1 - \kappa) \varphi^i \sigma_q(u^i) \colon e(u^i), v^\varphi \Big), \\
  \langle R(\varphi^i; \gamma), v^\varphi \rangle := & \, \Big(\gamma \max(0, \varphi^i - \varphi^{i-1}), v^\varphi \Big),
\end{align*}
for any $(v^u, v^\varphi) \in V$, see,~\cite{NeitzelWickWollner:2017,NeiWiWo19}. Here, $\kappa$ denotes a
(bulk) regularization parameter that helps extending the displacements
to the entire domain $\Omega$, $\varepsilon$ is a phase-field
regularization parameter, $\gamma$ is a penalty parameter for the
crack irreversibility condition $\varphi^i \leq \varphi^{i-1}$,
$\eta$ denotes a viscosity parameter, and $G_c$ is the critical energy
release rate. For further explanation on phase-field fracture, and the
physical interpretation of the involved parameters, we refer
to~\cite{NeitzelWickWollner:2017,NeiWiWo19,KhiSteiWi22_JCP}. 

We investigate an optimal control problem
with tracking type cost functional $J$. The objective is to reach a
given desired crack pattern $\varphi_d \in V$ as well as a desired material
matrix $q_d \in L^2(\Omega, \R^{3 \times 3}_{\rm{sym}})$. With
constraints given
by~\eqref{PDEFrac}, the optimal control problem reads: 

\begin{equation}\label{eq:Fracture-Problem}\tag{P$^{\rm{Frac}}$}
  \begin{aligned}
    \min_{q, \boldu} J(q, \boldu) := \sum_{i=1}^M&\Big( \frac{1}{2} \norm{ \varphi^i - \varphi_d}^2 + \frac{\alpha}{2} \norm{q - q_d}_2^2 + \beta B(q) \Big), \\
    \text{s.t. } & \quad \boldu^i \text{ and $q$ satisfy } \text{(PDE$^{Frac}$) for all } i =1,...,M,
  \end{aligned}
\end{equation}
where $\alpha > 0$ is a Tikhonov cost parameter, and $\beta > 0$ is a barrier parameter. The barrier function is defined by 
\begin{align*}
  -B(q) \coloneqq &\int_\Omega \log{(q_{1:1,1:1} - {q_L}_{1:1,1:1})} + \log{({q_U}_{1:1,1:1} - q_{1:1,1:1})}\\
  &+\log{\det(q_{1:2,1:2} - {q_L}_{1:2,1:2})} + \log{\det({q_U}_{1:2,1:2} - q_{1:2,1:2})}\\
  &+\log{\det(q - q_L)} + \log{\det(q_U - q)}\,\rm{d}x,
\end{align*}
where 
$q_{1:1,1:1}$, $q_{1:2,1:2}$ are the leading principal submatrices of
the control matrix $q$ defined in~\eqref{Qdef}. Further, $q_L =
q_{\min} I \in \R^{3\times 3}$, ${q_L}_{1:2,1:2} = q_{\min} I\in
\R^{2\times 2}$ and ${q_L}_{1:1,1:1} = q_{\min} \in \R$, respectively
for $q_U$, etc. Note that the integrand in the barrier is finite if
and only if $q-q_L$ and $q_U-q$ are positive definite and thus the
control fulfills the constraints specified in $\Qad$, similar to
Problem~\eqref{eq:CoefficientControlWObstacle} but without the need to
check for values $q$ outside of $\Qad$.

We conduct two numerical test examples, which are both motivated by
the single edge notched tension test~\cite{MieWelHof10b,
  MieWelHof10a}. The propagating fracture is caused by a constant
orthogonal force $\mathbf{f}_{|\Gamma_N} = \begin{pmatrix}0, & 2100
\end{pmatrix}^T$. In both examples, we chose the time interval $[0,1]$, with $501$ and $101$ equidistant time points in Example 1, and Example 2, respectively. The spatial mesh has $64 \times 64$ square elements.

\subsection{Example 1: Material Susceptible to Fracture Propagation}

In this first example the initial control is defined as
\begin{align*}
  q^{\rm{init}} = \begin{pmatrix}
    2 \mu_1 + \lambda_1 & \lambda_1 & 0\\
    \lambda_1 & 2 \mu_1 + \lambda_1 & 0\\
    0 & 0 & 2 \mu_1
  \end{pmatrix},
\end{align*} 
which represents the standard elasticity tensor for the Lam\'e parameters $\lambda_1 = \frac{2 \nu \mu_1}{1 - 2 \nu} $ and $\mu_1 = \frac{E}{2(1+\nu)}$, cf.,~\cite{KhiSteiWi22_JCP}.
The desired phase-field continues the initial notch to the left, i.e.,
\begin{align*}
  \varphi_d(x,y) \coloneqq
  \begin{cases}
    0, & x\in [0.25,0.5] \text{ and }y\in [0.5-0.0221, 0.5+0.0221]\\
    1, &\text{ else}.
  \end{cases}.
\end{align*}
The desired control $q_d$ is defined as 
\begin{align*}
	q_d&=q^{\rm{init}}\quad &\text{in } [0.45,1]\times [0,1],\\
  q_d &= \begin{pmatrix}
    2 \mu_2 + \lambda_2 & \lambda_2 & 0\\
    \lambda_2 & 2 \mu_2 + \lambda_2 & 0\\
    0 & 0 & 2 \mu_2
  \end{pmatrix},\quad &\text{in } [0,0.45]\times [0,1],
\end{align*}
which corresponds to the Lam\'e parameters $\lambda_2 = \lambda_1$ and
$\mu_2 = 0.01 \mu_1$ in the latter subdomain. The choice of $q_d$
describes a material that is more susceptible to fracture in the left
part of the domain. Within the optimization process we seek a control
$q$ that is closer to $q_d$ in order to get a different crack pattern,
compared to the one that we get from $q^{\rm{init}}$. In
Table~\ref{tb:FractureProblemParametersPropagation}, we present
further numerical parameters that lead to $\rm{tr}_{\Omega}(q^{\rm{init}}) := \int_{\Omega} \rm{trace}(q^{\rm{init}})\,\rm{d}x = 6\mu_1 + 2\lambda_1  = 3.056 \cdot 10^6$. In Figure~\ref{fig:EX_1_PF}, we compare the phase-fields at the final timepoint
$t_{500}$ for the initial control $q^{\rm{init}}$ and the control $q^{\rm{fin}}$, where
$\rm{tr}_{\Omega} (q^{\rm{fin}}) \approx 2.776 \cdot 10^6 $, see Figure \ref{fig:EX_1_Control} for the corresponding diagonal entries of $q^{\rm{fin}}$.
%

\subsection{Example 2: Effects of the Desired Control}
In this example, we focus on the effects of adjusting the desired
control $q^d$. On $[0.35,1] \times [0,1]$ we set $q_d = q^{\rm{init}}$. On $[0,0.35] \times [0,1]$ we adjust the Lam\'e parameters similar to Example 1, but using $\mu_2 = 100 \mu_1$. Here we chose a time interval in $[0,1]$ with $101$ equidistant time points. Mesh size, constant orthogonal force $\mathbf{f}_{|\Gamma_N} = \begin{pmatrix}0, & 2100
\end{pmatrix}^T$, and all other parameters remain the same, see  Table~\ref{tb:FractureProblemParametersPropagation}. We want to observe the effects of increasing the Lam\'e parameter $\mu_2$ on part of the domain to achieve a different crack pattern, as opposed to Example 1 where we observed the effects of decreasing this parameter. In Figure~\ref{fig:EX_2_PF}, we compare the phase fields at final timepoint $t_{100}$ for the initial control $q^{\rm{init}}$ and the control $q^{\rm{fin}}$, where $\rm{tr}_{\Omega}(q^{\rm{fin}}) \approx 1.5332 \cdot 10^8$, for the corresponding diagonal entries of $q^{\rm{fin}}$ we refer to Figure \ref{fig:EX_2_Control}.

\begin{table}[h!]
  \begin{center}
    \begin{tabular}{c c c c} 
      \hline
      Parameter & Definition & Value  \\ [0.5ex] 
      \hline\hline
      $\varepsilon$ & Regularization (crack) & 0.0884 \\ 
      \hline
      $\kappa$ & Regularization (bulk) & 1.0e-10 \\ 
      \hline
      $\eta$ & Regularization (viscosity) & 1.0e3 \\
      \hline
      $\gamma$ & Penalty & 1.0e5 \\ 
      \hline
      $\alpha$ & Tikhonov & 4.75e-4 \\
      \hline
      $G_c$ & Fracture toughness & 1.0 \\
      \hline
      $\nu$ & Poisson's & 0.2 \\
      \hline
      $E$ & Young's modulus & 1.0e6 \\
      \hline
    \end{tabular}
  \end{center}
  \caption{Parameters for Problem~\eqref{eq:Fracture-Problem}.}
  \label{tb:FractureProblemParametersPropagation}
\end{table}
\newcommand\image[2][]{\bgroup\fboxsep0pt\fboxrule0pt
  \protect\fcolorbox{black}{black!10}{\protect\includegraphics[#1]{#2}}\egroup}

\newcommand\scale[5][0.1]{\parbox{#2\linewidth}{\vspace{0.3pc}%
    \makebox[\linewidth]{$#3$\hfill$#4$\hfill$#5$}\\[-#1pc]%
    \image[trim=225 1180 275 945,clip=true,width=1.0\linewidth]
    {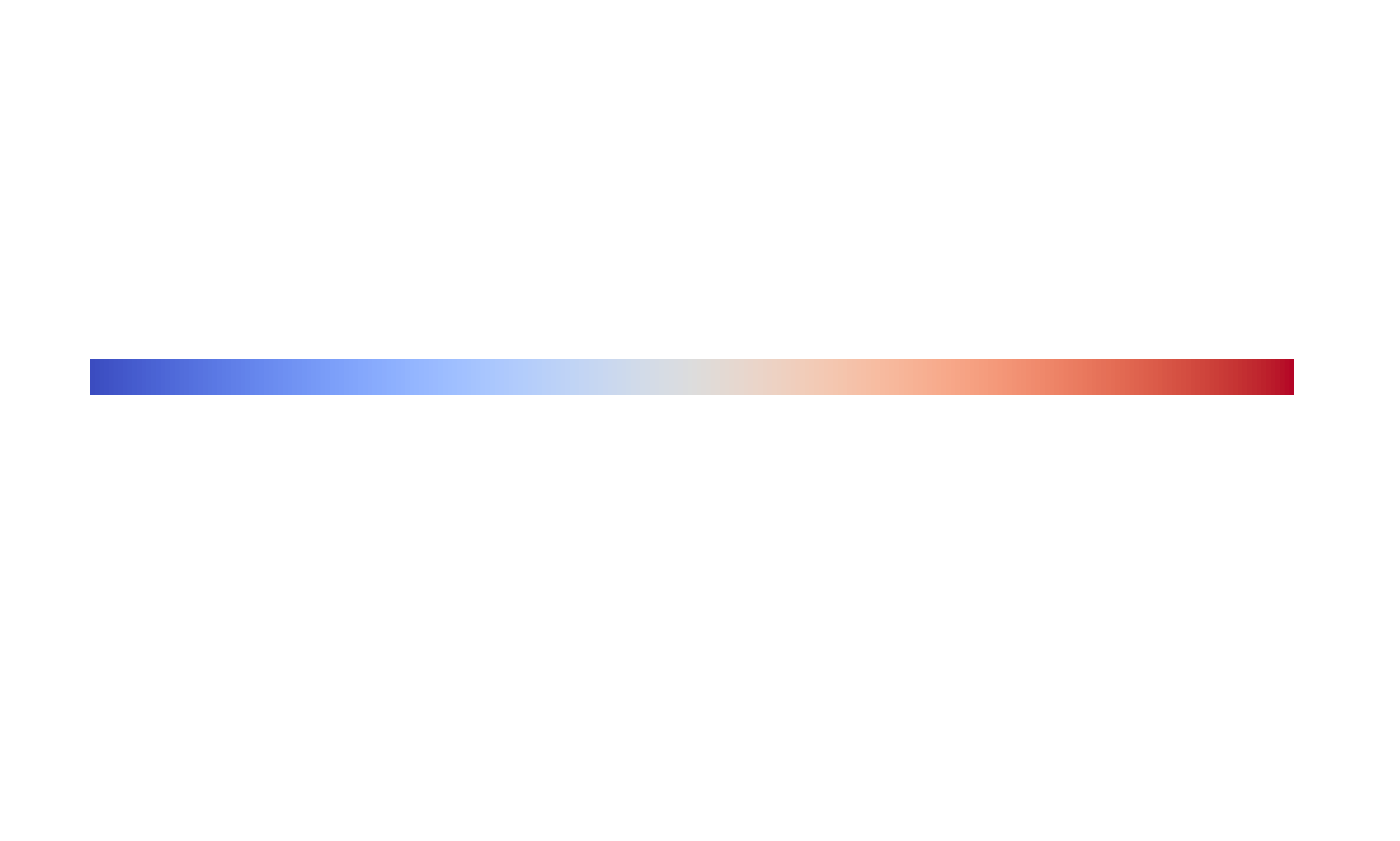}}%
}
\newcommand\img[1]{\image
  [trim=550 70 550 70,clip=true,width=.5\linewidth]
  {images/WeakMaterial_state_ts500_it#1.png}}
\begin{figure}[h!]
  \footnotesize
  \centering
  \img{0}\hfill
  \img{14}\\
  \scale[0.33]{0.32}{0}{0.5}{1}%
  \caption{Crack Pattern of Example 1 after 500 timesteps for initial control (left) and final control (right) on a $64 \times 64$ mesh}
  \label{fig:EX_1_PF}
\end{figure}

\renewcommand\img[1]{\image
  [trim=550 70 550 70,clip=true,width=.3\linewidth]
  {images/WeakMaterial_it14_control#1.png}}
\begin{figure}[h!]
  \footnotesize
  \centering
  \img{11}\hfill
  \img{22}\hfill
  \img{33}\\
  \scale[0.33]{0.3}{9 \cdot 10^5}{}{1.1 \cdot 10^6}\hfill
  \scale[0.33]{0.3}{9 \cdot 10^5}{}{1.1 \cdot 10^6}\hfill
  \scale[0.33]{0.3}{6.3 \cdot 10^5}{}{8.3 \cdot 10^5}%
  \caption{Diagonal entries of final control $q^{\rm{fin}}$ of Example 1 on a $64 \times 64$ mesh}
  \label{fig:EX_1_Control}
\end{figure}

\renewcommand\img[1]{\image
  [trim=550 70 550 70,clip=true,width=.5\linewidth]
  {images/StrongMaterial_state_ts100_it#1.png}}
\begin{figure}[h!]
  \footnotesize
  \centering
  \img{0}\hfill
  \img{30}\\
  \scale[0.33]{0.32}{0}{0.5}{1}\hspace{2cm}
  \scale[0.33]{0.32}{0.85}{0.925}{1}
  \caption{Crack Pattern of Example 2 after 100 timesteps for initial control (left) and final control (right) on a $64 \times 64$ mesh}
  \label{fig:EX_2_PF}
\end{figure}
\renewcommand\img[1]{\image
  [trim=550 70 550 70,clip=true,width=.3\linewidth]
  {images/StrongMaterial_it30_control#1.png}}
\begin{figure}[h!]
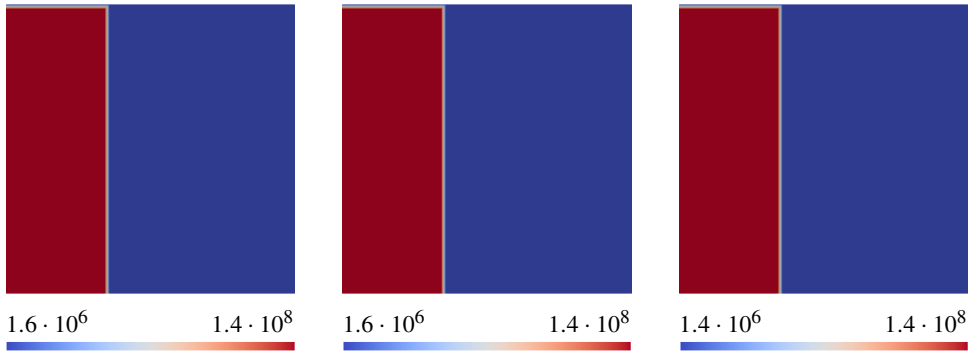

  \footnotesize
  \centering
  \img{11}\hfill
  \img{22}\hfill
  \img{33}\\
  \scale[0.33]{0.3}{1.6 \cdot 10^6}{}{1.4 \cdot 10^8}\hfill
  \scale[0.33]{0.3}{1.6 \cdot 10^6}{}{1.4 \cdot 10^8}\hfill
  \scale[0.33]{0.3}{1.4 \cdot 10^6}{}{1.4 \cdot 10^8}%
  \caption{Diagonal entries of final control $q^{\rm{fin}}$ of Example 2 on a $64 \times 64$ mesh}
  \label{fig:EX_2_Control}
\end{figure}

\newpage
\section{Summary of Further Project Results and Outlook}\label{sec:outlook}
The article summarized some results obtained within the project
,,Optimizing Fracture Propagation using a Phase-Field Approach''
concerning existence and first order optimality conditions for 
control in the coefficients of a variational inequality.
Improved results and detailed proofs for these optimality conditions will be 
subject of a forthcoming publication.
Further, some
numerical results for a related coefficient control problem of 
phase-field fracture are provided.
The project analyzed in detail the control of such phase-field
fracture problems by the applied forces and the convergence in the
regularization limit in~\cite{NeitzelWickWollner:2017,NeiWiWo19}.
These results where enabled by a fundamental result on higher
integrability of solutions to elliptic systems by~\cite{HallerDintelmannMeinlschmidtWollner:2018}.
The analysis of such phase-field control problems could be extended to second order sufficient conditions~\cite{HehlNeitzel2022}, and finite element error estimates
where obtained in~\cite{MohammadiWollner:2021} for a linearized fracture control problem.  Analysis of local quadratic convergence of the SQP method for regularized fracture with control on a Neumann boundary is subject of a forthcoming publication.

Further results on optimality conditions for control of variational inequalities have been obtained in~\cite{NeitzelWachsmuth}, including
state-constraints and control in the right-hand-side, as well as for coefficient control problems in~\cite{SimonWollner:2023}.
A posteriori~\cite{EndtmayerLangerNeitzelWickWollner:2019} and a priori~\cite{HirnWollner:2022} finite element error analysis for non-smooth control problems of equations with $p$~structure could be carried out within the project. 
The project was complemented by developments of algorithms for the
control of phase-field fracture  in~\cite{Khimin2022,KhiSteiWi22_JCP} and of
Lagrange multiplier methods~\cite{KarlNeitzelWachsmuth} for nonlinear
elliptic state-constrained problems.

\section*{Acknowledgments}
Funded by the Deutsche Forschungsgemeinschaft (DFG) --
Projektnummer 314067056 within SPP 1962


\end{document}